\def\preprint{0}
\providecommand{\oline}[1]{\mkern 1.5mu\overline{\mkern-1.5mu#1}}
\renewcommand{\hbar}{\oline{h}}
\newcommand{\Embb}{\mathbb{E}}
\newcommand{\Rmbb}{\mathbb{R}}
\newcommand{\vb}{\mathbf{b}}
\newcommand{\vc}{\mathbf{c}}
\newcommand{\vx}{\mathbf{x}}
\newcommand{\vy}{\mathbf{y}}
\newcommand{\vz}{\mathbf{z}}
\newcommand{\mA}{\mathbf{A}}
\newcommand{\mB}{\mathbf{B}}
\newcommand{\mD}{\mathbf{D}}
\newcommand{\mI}{\mathbf{I}}
\newcommand{\mM}{\mathbf{M}}
\newcommand{\mP}{\mathbf{P}}
\newcommand{\mS}{\mathbf{S}}
\newcommand{\mU}{\mathbf{U}}
\newcommand{\mV}{\mathbf{V}}
\newcommand{\mW}{\mathbf{W}}
\newcommand{\mZ}{\mathbf{Z}}
\newcommand{\cD}{{\cal D}}
\newcommand{\parens}[1]{\left( #1 \right)}
\newcommand{\bracks}[1]{\left[ #1 \right]}
\newcommand{\norm}[1]{\left\| #1 \right\|}
\newcommand{\normsq}[1]{\norm{#1}^2}
\newcommand{\abs}[1]{\left| #1 \right|}
\newcommand{\qtext}[1]{\quad \text{#1} \quad}
\newcommand{\trans}{{T}}
\newcommand{\inv}{{-1}}
\newcommand{\pinv}{{\dagger}}
\newcommand{\eqdef}{\overset{\mathop{def}}{=}}
\newcommand{\thup}{^{\text{th}}}
\newcommand{\E}{\Embb}
\newcommand{\R}{\Rmbb}
\newcommand{\Eb}[1]{\E\bracks{#1}}
\DeclareMathOperator*{\argmin}   {arg\,min}
\DeclareMathOperator*{\proj}     {proj}
\DeclareMathOperator{\rowspace} {row}
\DeclareMathOperator{\colspace} {col}
\newtheorem{theorem}{Theorem}[section]
\crefname{assumption}{Assumption}{Assumptions}
\newtheorem{remark}[theorem]{Remark}
\newtheorem{prop}[theorem]{Proposition}
\newcommand{\vecstack}[2]{\begin{bmatrix} #1 \\ #2 \end{bmatrix}}
\newcommand{\colj}{{:j}}
\newcommand{\rowi}{{i:}}
\newcommand{\subij}{{ij}}
\newcommand{\bi}{b_i}
\newcommand{\Ai}{\mA_{\rowi}}
\newcommand{\Aj}{\mA_{\colj}}
\newcommand{\Aij}{A_{ij}}
\newcommand{\yinit}{\vy^0}
\newcommand{\yone}{\vy^1}
\newcommand{\ytwo}{\vy^2}
\newcommand{\yk}{\vy^k}
\newcommand{\ykpo}{\vy^{k+1}}
\newcommand{\yopt}{\vy^\star}
\newcommand{\xinit}{\vx^0}
\newcommand{\xk}{\vx^k}
\newcommand{\xkpo}{\vx^{k+1}}
\newcommand{\xopt}{\vx^\star}
\newcommand{\zinit}{\vz^0}
\newcommand{\zk}{\vz^k}
\newcommand{\zkpo}{\vz^{k+1}}
\newcommand{\zopt}{\vz^\star}
\newcommand{\zkpoi}{z^{k+1}_i}
\newcommand{\zx}{\vecstack{\vz}{\vx}}
\newcommand{\zxk}{\vecstack{\zk}{\xk}}
\newcommand{\zxkpo}{\vecstack{\zkpo}{\xkpo}}
\newcommand{\rhsi}{\vecstack{0}{b_i}}
\newcommand{\eigmin}{\lambda_{\min}^+}
\newcommand{\prows}{\mP_{\text{rows}}}
\newcommand{\pcols}{\mP_{\text{cols}}}
\newcommand{\drows}{\mD_{\text{rows}}}
\newcommand{\dcols}{\mD_{\text{cols}}}
    \newcommand{\jdm}[1]{\todo[inline]{\textbf{Jacob: }#1}}
    \newcommand{\ben}[1]{\todo[inline]{\textbf{Ben: }#1}}
    \newcommand{\nate}[1]{\todo[inline]{\textbf{Nate: }#1}}
    \newcommand{\jdm}[1]{}
    \newcommand{\ben}[1]{}
    \newcommand{\nate}[1]{}
\author{Benjamin Jarman}
\affil{University of California, Los Angeles, Los Angeles, CA}
\affil{\href{mailto:bjarman@math.ucla.edu}{bjarman@math.ucla.edu}}
\author{Nathan Mankovich}
\affil{Colorado State University, Fort Collins, CO}
\affil{\href{mailto:nate.mankovich@colostate.edu}{nate.mankovich@colostate.edu}}
\author{Jacob D. Moorman}
\affil{University of California, Los Angeles, Los Angeles, CA}
\affil{\href{mailto:jacob@moorman.me}{jacob@moorman.me}}
\title{Randomized Extended Kaczmarz is a Limit Point of Sketch-and-Project}
\date{}
\begin{document}

\maketitle
\begin{abstract}
    The sketch-and-project (SAP) framework for solving systems of linear equations has unified the theory behind popular projective iterative methods such as randomized Kaczmarz, randomized coordinate descent, and variants thereof. The randomized extended Kaczmarz (REK) method is a popular extension of randomized Kaczmarz for solving inconsistent systems, which has not yet been shown to lie within the SAP framework. In this work we show that, in a certain sense, REK may be expressed as the limit point of a family of SAP methods, but we argue that it is unlikely that REK can be translated into a SAP method itself. We provide an extensive theoretical analysis of the family of methods comprising said limit, including convergence guarantees and further connections to REK. We follow this with an array of experiments demonstrating these methods and their connections in practice.
\end{abstract}

%*********
% Section
%*********
\section{Introduction}

%\jdm{Are we using ``linear" or ``exponential"? We need to go through at the end and make sure we are consistent throughout and define our terminology somewhere.}
%\jdm{Do we want to remark about the limit $\epsilon \to \infty$?}
%\jdm{Can we get rid of $\xopt$ and just use $\xopt$ for everything?}
%\jdm{Please use the word ``method" instead of ``algorithm" throughout, unless referred to an algorithm environment using e.g. \cref{REKcode}}
%\jdm{Choose between ``The REK method" throughout and ``REK". Be consistent throughout. Or don't.}
%\jdm{Switch to the LAA latex template.}
%\jdm{Get rid of the numbers on any equations that are not referred to.}
%\jdm{We should be consistent about ``systems of linear equations" versus ``linear systems of equations" etc throughout.}
%\jdm{Any claims that are not self evident should be backed up with citations or explanations. I've left some \citep{todo}.}
%\jdm{Names of algorithms/methods should not be capitalized unless they are proper nouns e.g. Kaczmarz. So ``Randomized Kaczmarz" should be ``randomized Kaczmarz", ``Coordinate Descent" should be ``coordinate descent", etc. throughout.}
%\jdm{We should either use SAP-REK$(\epsilon)$ throughout, or decide exactly when it is okay to drop the parameter.}

Solving systems of linear equations of the form $\mA \vx = \vb$ is a foundational problem in applied mathematics. It arises in contexts from image and signal processing \citep{natterer, Feichtinger1992NewVO, Herman1993AlgebraicRT}, to machine learning subroutines and modern data science \citep{leskovec_rajaraman_ullman_2014}. 
Classical methods such as Gaussian elimination or Cholesky decomposition do not scale well to these big-data-driven applications due to their high computational cost and the requirement of loading large portions of the system into memory. 
In the search for fast and scalable solvers, projection-based iterative methods have risen in popularity \citep{popa2012projection}, due to their lower computational cost and low memory requirements.

One such method is due to Kaczmarz \citep{ogkacz}, rediscovered in the computer tomography literature as the Algebraic Reconstruction Technique \citep{hounsfield_CAT, Herman1993AlgebraicRT}:
an initial guess $\xinit$ is iteratively projected onto the hyperplanes defined by the rows of the system, $\{\vx : \Ai \vx = \bi\}$ for row indices $i$. 
In the original Kaczmarz method, the hyperplanes were cycled through in order \citep{ogkacz}, however, this can produce slow convergence when consecutive hyperplanes have small incident angle.
\citet{vershstrohkacz} showed that a randomized variant of the method - randomized Kaczmarz (RK) - does not suffer from the possible unlucky ordering of the equations, and will in fact achieve linear convergence in expectation.
% in which rows to be projected onto are chosen at random with probability proportional to their squared Euclidean norm. 
At each iteration a row is chosen with probability proportional to its Euclidean norm: applied to the system $\mA \vx = \vb$, the Kaczmarz update is
\begin{equation}\label{eqn:rk-update}
    \xkpo = \xk - \frac{\Ai\xk - \bi}{\norm{\Ai}^2}\Ai^\trans,
\end{equation}
where $i$ is the index of the equation chosen at the $k\thup$ iteration, and $\mA_\rowi$ is the $i\thup$ row of $\mA$. 
If the system has a least-norm solution $\xopt$ and the initial guess is chosen in the row-space of the matrix $\xinit \in \rowspace(\mA)$, RK enjoys linear convergence in the expectation of the mean squared error: initially proven in \citep{vershstrohkacz} with a convergence rate depending on the smallest singular value of $\mA$, the result was improved in \citep{rek} to depend on the smallest \textit{positive} singular value.
\begin{theorem}[\citet{vershstrohkacz,rek}]\label{rkconvergence}
    Let $\mA \in \mathbb{R}^{m \times n}$, $\vb \in \mathbb{R}^m$ be such that the system $\mA \vx = \vb$ is consistent, with solution $\xopt$. Then the iterates of RK applied to this system satisfy
    \begin{equation}
        \Eb{\norm{\xk - \xopt}^2} \leq \left(1 - \frac{\sigma_{\min}^2(\mA)}{\norm{\mA}_F^2}\right)^k\norm{\xinit - \xopt}^2,
    \end{equation}
    where $\sigma_{\min}(\mA)$ is the smallest nonzero singular value of the matrix, $\norm{\cdot}$ is the Euclidean norm, and $\norm{\cdot}_F$ is the Fr\"obenius norm.
\end{theorem}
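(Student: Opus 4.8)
The plan is to track the squared error $\normsq{\ek}$, where $\ek \eqdef \xk - \xopt$, and show it contracts in expectation by the advertised factor at every step. The first observation is that, because $\xopt$ solves the system, we have $\Ai \xopt = \bi$ for every row $i$, so the RK update \eqref{eqn:rk-update} simplifies to a statement purely about the error:
\begin{equation}
    \ekpo = \ek - \frac{\Ai\ek}{\norm{\Ai}^2}\Ai^\trans.
\end{equation}
Geometrically this says $\xkpo$ is the orthogonal projection of $\xk$ onto the hyperplane $\{\vx : \Ai\vx = \bi\}$, which contains $\xopt$. Hence the increment $\xkpo - \xk$ (a multiple of $\Ai^\trans$) is orthogonal to $\ekpo$, and the Pythagorean theorem gives the exact per-step identity
\begin{equation}
    \normsq{\ekpo} = \normsq{\ek} - \frac{(\Ai\ek)^2}{\norm{\Ai}^2}.
\end{equation}

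Next I would take the conditional expectation over the random row index $i$, which is drawn with probability $p_i = \norm{\Ai}^2 / \norm{\mA}_F^2$. The weighting is chosen precisely so that the $\norm{\Ai}^2$ factors cancel:
\begin{equation}
    \Eb{\normsq{\ekpo} \mid \xk} = \normsq{\ek} - \sum_i \frac{\norm{\Ai}^2}{\norm{\mA}_F^2}\cdot\frac{(\Ai\ek)^2}{\norm{\Ai}^2} = \normsq{\ek} - \frac{\norm{\mA\ek}^2}{\norm{\mA}_F^2}.
\end{equation}
It remains to lower-bound $\norm{\mA\ek}^2$ by a multiple of $\normsq{\ek}$, and this is where the main subtlety lies.

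The naive bound $\norm{\mA\ek}^2 \geq \sigma_{\min}^2(\mA)\normsq{\ek}$ with $\sigma_{\min}$ the \emph{smallest} singular value would be vacuous whenever $\mA$ is rank-deficient, since that singular value is then zero. To obtain the sharper rate with the smallest \emph{nonzero} singular value, I would argue that the error stays in the row space: $\ek \in \rowspace(\mA)$ for all $k$. This follows by induction, using the hypothesis $\xinit \in \rowspace(\mA)$, the fact that the least-norm solution satisfies $\xopt \in \rowspace(\mA)$, and the observation that each update adds a multiple of $\Ai^\trans \in \rowspace(\mA)$. On the row space --- the orthogonal complement of $\kernel(\mA)$ --- the operator $\mA$ is bounded below by its smallest positive singular value, so $\norm{\mA\ek}^2 \geq \sigma_{\min}^2(\mA)\normsq{\ek}$ holds legitimately. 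I regard verifying this invariant as the crux of the argument; it is exactly the structural fact that upgrades the rate of \citet{vershstrohkacz} to that of \citet{rek}.

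Combining the last two displays yields the one-step contraction
\begin{equation}
    \Eb{\normsq{\ekpo} \mid \xk} \leq \parens{1 - \frac{\sigma_{\min}^2(\mA)}{\norm{\mA}_F^2}}\normsq{\ek}.
\end{equation}
Finally I would take a full expectation, apply the tower property, and unroll the recursion over $k$ steps starting from $\normsq{\einit} = \norm{\xinit - \xopt}^2$ to obtain the claimed geometric decay. The remaining work is purely routine bookkeeping; no further inequalities are needed beyond the single-step estimate above.
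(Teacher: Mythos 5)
Your proof is correct and follows essentially the same route as the argument behind this result in the cited sources (the paper itself does not prove \cref{rkconvergence}, deferring to \citet{vershstrohkacz,rek}): the Pythagorean one-step identity for the projection, expectation over the norm-weighted row distribution to produce $\norm{\mA\ve^k}^2/\norm{\mA}_F^2$, and the row-space invariant that legitimizes the bound via the smallest \emph{positive} singular value. The only caveat is that the invariant needs $\xinit \in \rowspace(\mA)$ and $\xopt$ to be the least-norm solution --- hypotheses that appear in the paragraph preceding the theorem rather than in its statement --- and you correctly identified and imported exactly these assumptions as the crux of the rank-deficient case.
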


%\jdm{Not sure if all the definitions at the end of the theorem above really belong inside the theorem environment.}
%\jdm{We should be more precise about the convergence of RK when $\xinit$ is not chosen within the row-space of $\mA$ since that is the situation for the $\vz$ iteration in REK.}
%\jdm{Ideally we would include more references outside our group below.}

Motivated by RK, a multitude of row-projection type methods have been developed: block variants, where one projects onto an entire block of rows at each iteration \citep{needell2014paved}; averaged variants, where one takes a sample of rows and averages their projections \citep{moorman_randomized_2020,necoara_faster_2019}; greedy methods, where one projects onto rows producing large movements \citep{Haddock2021GreedWA,gower_adaptive_2019,bai_greedy_2018}, and more \citep{needell2013two}. These methods each achieve linear convergence in mean squared error for consistent systems analogous to that of RK with convergence rates depending on properties of the matrix $\mA$ as well as parameters such as pavings for block methods and sample size for averaged methods.

If the system $\mA \vx = \vb$ is not consistent and we seek the least squares solution $\xopt = \mA^\dagger \vb$, RK is only guaranteed to converge up to a horizon, past which convergence is no longer guaranteed.

\begin{theorem}[\citet{needellnoisyrk}]
Let $\mA \in \mathbb{R}^{m \times n}$, $\vb \in \mathbb{R}^m$, and let $\xopt$ be the least squares solution of the system $\mA \vx = \vb$. Then the iterates of RK applied to this system satisfy:
\begin{equation*}
    \Eb{\norm{\xk - \xopt}^2} \leq \left(1 - \frac{\sigma_{\min}^2(\mA)}{\norm{\mA}_F^2}\right)^k \norm{\xinit - \xopt}^2 + \max_i \frac{ \abs{\mA_\rowi \xopt - b_i}}{\norm{\mA_\rowi}^2}
\end{equation*}
\end{theorem}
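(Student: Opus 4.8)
The plan is to track the squared error $\normsq{\ek}$ with $\ek = \xk - \xopt$, isolating the effect of the least-squares residual. Write $\vr = \mA\xopt - \vb$ for the residual vector and $r_i = \Ai\xopt - \bi$ for its $i\thup$ entry. Since $\xopt = \mA^\pinv\vb$ lies in $\rowspace(\mA)$, and assuming as in \cref{rkconvergence} that $\xinit \in \rowspace(\mA)$, every iterate satisfies $\ek \in \rowspace(\mA)$ because each update in \eqref{eqn:rk-update} moves along $\Ai^\trans$. The key first step is to rewrite the quantity driving the update as $\Ai\xk - \bi = \Ai\ek + r_i$, splitting it into a ``signal'' part $\Ai\ek$ and a ``noise'' part $r_i$.

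Substituting this decomposition into the update and expanding the norm, I would derive the exact one-step identity
\[
    \normsq{\ekpo} = \normsq{\ek} - \frac{(\Ai\ek)^2}{\norm{\Ai}^2} + \frac{r_i^2}{\norm{\Ai}^2}.
\]
The pleasant feature here is that the cross terms between signal and noise cancel algebraically, via the factorization $r_i^2 - (\Ai\ek)^2 = (r_i + \Ai\ek)(r_i - \Ai\ek)$, so no appeal to the normal equations $\mA^\trans\vr = 0$ is required. Next I would take the expectation over the row index $i$, drawn with probability $p_i = \norm{\Ai}^2/\norm{\mA}_F^2$, conditional on $\xk$. This weighting is chosen precisely so that $\sum_i p_i (\Ai\ek)^2/\norm{\Ai}^2 = \norm{\mA\ek}^2/\norm{\mA}_F^2$, and because $\ek$ remains in the row space I can apply the singular-value bound $\norm{\mA\ek}^2 \ge \sigma_{\min}^2(\mA)\normsq{\ek}$ (the Rayleigh quotient of $\mA^\trans\mA$ restricted to $\rowspace(\mA)$ is at least $\sigma_{\min}^2$).

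The remaining noise contribution $\sum_i p_i\, r_i^2/\norm{\Ai}^2$ is then bounded by the horizon constant, producing a conditional contraction of the form $\Eb{\normsq{\ekpo} \mid \xk} \le \bigl(1 - \sigma_{\min}^2/\norm{\mA}_F^2\bigr)\normsq{\ek} + \gamma$. Taking total expectations and unrolling this recursion through the geometric series $\sum_{j=0}^{k-1}\bigl(1 - \sigma_{\min}^2/\norm{\mA}_F^2\bigr)^j \le \norm{\mA}_F^2/\sigma_{\min}^2$ yields the stated geometric term $\bigl(1 - \sigma_{\min}^2/\norm{\mA}_F^2\bigr)^k\normsq{\xinit - \xopt}$ plus the constant horizon. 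I expect the main obstacle, and the step most easily gotten wrong, to be the subspace bookkeeping: one must verify that $\ek$ never acquires a component in $\kernel(\mA)$, since otherwise the bound $\norm{\mA\ek}^2 \ge \sigma_{\min}^2\normsq{\ek}$ fails and the contraction collapses. Care is likewise needed in tracking the precise constant in the horizon term, which depends on exactly how the noise sum $\sum_i p_i\, r_i^2/\norm{\Ai}^2$ is bounded against $\max_i |\Ai\xopt - \bi|/\norm{\Ai}^2$ and on the factor contributed by the geometric-series sum.
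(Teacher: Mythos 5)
Your core derivation is sound, and it is essentially the standard argument for this result; note that the paper itself contains no proof of this theorem at all --- it is quoted as background from \citet{needellnoisyrk}, so the only meaningful comparison is against that reference's proof, which your outline reproduces. In particular: the exact one-step identity is correct (the cross terms do cancel algebraically, and you are right that no appeal to the normal equations is needed); the expectation under norm-squared row sampling gives $\Eb{\normsq{\ekpo} \mid \xk} = \normsq{\ek} - \normsq{\mA\ek}/\norm{\mA}_F^2 + \normsq{\mA\xopt - \vb}/\norm{\mA}_F^2$; and the subspace bookkeeping you worried about does go through, since $\xopt = \mA^\pinv\vb \in \rowspace(\mA)$, $\xinit \in \rowspace(\mA)$, and each update moves along $\Ai^\trans \in \rowspace(\mA)$, so $\norm{\mA\ek}^2 \ge \sigma_{\min}^2(\mA)\normsq{\ek}$ is valid throughout.

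The genuine problem is precisely the step you flagged as delicate: the horizon constant. Unrolling your recursion yields
\begin{equation*}
    \Eb{\normsq{\xk - \xopt}} \le \left(1 - \frac{\sigma_{\min}^2(\mA)}{\norm{\mA}_F^2}\right)^k \normsq{\xinit - \xopt} + \frac{\norm{\mA}_F^2}{\sigma_{\min}^2(\mA)} \max_i \frac{\abs{\Ai\xopt - \bi}^2}{\normsq{\Ai}},
\end{equation*}
with the middle quantity $\normsq{\mA\xopt - \vb}/\sigma_{\min}^2(\mA)$ available as a tighter horizon if you keep the exact noise sum rather than passing to the maximum. This is Needell's actual theorem: the horizon is $R\gamma^2$ with $R = \norm{\mA}_F^2/\sigma_{\min}^2(\mA)$ and $\gamma = \max_i \abs{\Ai\xopt - \bi}/\norm{\Ai}$. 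It is \emph{not} the horizon printed in the statement, $\max_i \abs{\Ai\xopt - \bi}/\normsq{\Ai}$, which has the residual to the first power and no condition-number factor. No refinement of your argument can close this gap, because the printed statement is false as written: it is not even scale-consistent. Replace $\vb$ by $c\vb$ (so $\xopt$ and the residual scale by $c$) and take $\xinit = \xopt$; already at $k=1$ the expected squared error equals $c^2\normsq{\mA\xopt - \vb}/\norm{\mA}_F^2$, growing quadratically in $c$, while the claimed right-hand side grows only linearly in $c$. So the verdict is: your proof is correct and establishes the theorem that the paper \emph{intended} to cite, while the statement as printed in the paper is a garbled transcription of that result; the mismatch lies in the paper's statement, not in your argument.
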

Similar results exist, for both consistent and inconsistent systems, for block and greedy variants of RK--see e.g. \citep{needell2014paved, Haddock2021GreedWA}.

To eliminate this convergence horizon and achieve convergence all the way to $\xopt$, \citet{rek} proposed randomized extended Kaczmarz (REK). REK uses an auxiliary iterate $\zk$ to approximate $\zopt=\proj_{\ker(\mA^\trans)}(\vb)$ so that $\mA \vx = \vb - \zk$ approaches the consistent system $\mA \vx = \vb - \zopt$, which is solved by iterations of RK.
% REK uses a pair of updates, where updates are made to simultaneously approximate the projection of $\vb$ onto the row space of $\mA$, and to iteratively solve the system with this approximation. 
We give a more detailed introduction in \cref{rekbackground}. 
Similar extensions have also been found for block methods, see e.g. \citep{Du2020RandomizedEA}.

RK, its variants, and other iterative projective linear system solvers share a common theme: at each iteration, a sample of rows or columns of $\mA$---a sketch---is taken, and the previous iterate is projected onto the solution space of the sketched system. Gower and Richt\'arik \citep{gower_randomized_2015} introduced a framework to unify such methods, named Sketch-and-Project (SAP). They introduced a general method depending on a distribution of sketching matrices $\mathcal{D}$ and a symmetric positive definite matrix $\mB$, where at each iteration a sketching matrix $\mS$ is drawn from $\mathcal{D}$ and the previous iterate is projected onto the solution space of $\mS^\trans \mA \vx = \mS^\trans \vb$, relative to the norm induced by $\mB$. They showed that particular choices of $\mathcal{D}$ and $\mB$ give rise to RK, randomized coordinate descent, and other popular solvers. Furthermore, they proved a general convergence result for SAP from which best-known convergence results of said methods may be recovered.
% In the same paper, RK, Block RK, and other variants were all shown to be particular cases of the SAP theory. 
We give a more detailed introduction to SAP in \cref{sapbackground}.

Up to this point, it was not known whether REK can be formulated as a SAP method, despite its similarity to RK and methodology of solving a sketched system. 
In this paper, we demonstrate that for a suitably embedded problem, it is unlikely that REK may be expressed as a SAP method. However, we show that REK may instead be extracted as a limit point (in a particular sense) of a family of SAP methods.  
We study the convergence of the methods that lie along the limit both theoretically and empirically.
% We give theoretical and empirical analyses of the methods that form this limit, including detailed discussions on how the limit affects the update, convergence rate, and sampling probabilities.

\subsection{Organization}

%\jdm{Make sure we use ``$\epsilon \to 0$" throughout.}

The rest of the paper is organized as follows. In the remainder of the introduction we introduce our notation (\ref{notation}) and our main contributions (\ref{contributions}). In \cref{background} we provide detailed backgrounds for REK (\ref{rekbackground}) and SAP (\ref{sapbackground}), including pseudocode and known convergence results. In \cref{bridging} we discuss directions suggesting that REK is not a member of the SAP family (\ref{rekisnotsap}), and present a specific SAP method SAP-REK$(\epsilon)$ depending on a parameter $\epsilon$, whereupon taking the limit $\epsilon \to 0$ we recover the REK update (\ref{SAP-REKlimit}). In  \cref{SAP-REKanalysis} we provide a detailed theoretical analysis of SAP-REK$(\epsilon)$ and its behavior in said limit, and in \cref{experiments} we give an analogous empirical analysis. Lastly, in \cref{conclusion}, we conclude and discuss future directions.
\subsection{Notation}
\label{notation}
Throughout, we indicate matrix and vector quantities $\mA, \mI, \vx, \vb, \ldots$ with boldface, and scalars $\epsilon, m, b_i, \ldots$ in regular type. The transpose and pseudoinverse of a matrix $\mA$ are denoted $\mA^T$ and $\mA^\dagger$ respectively. When working with a system of equations $\mA \vx = \vb$, we use $m$ and $n$ to represent the number of rows and columns respectively so that $\mA \in \mathbb{R}^{m \times n}$ and $\vb \in \mathbb{R}^{m}$. For indexes $i \in \{1, \ldots, m\}$ and $j \in \{1, \ldots, n\}$, we use $\Ai$ and $\Aj$ to refer to the $i\thup$ row and $j\thup$ column of a matrix $\mA$. The row and column spaces of a matrix $\mA$ are denoted $\rowspace(\mA)$ and $\colspace(\mA)$.

%\jdm{I changed the definition of $\sigma_{\min}(\mA)$ below from $\sigma_{\min(m,n)}$ to $\sigma_{r}(\mA)$ but we should probably change it back if we're assuming full-rankness. If we're going to assume full-rankness, we should state that somewhere in the intro and again wherever we make use of it.}

The nonzero singular values of a matrix $\mA$ are $\sigma_1(\mA) \geq \sigma_2(\mA) \geq \ldots \sigma_{r}(\mA):= \sigma_{\min}(\mA) > 0$, where $r$ is the rank of $\mA$. 
For a symmetric positive semi-definite matrix $\mW$, the smallest positive eigenvalue is denoted $\lambda_{\min}^+(\mW)$. 
The Fr\"obenius norm of a matrix $\mA$ is denoted $\norm{\mA}_F = \sqrt{\sum_{i=1}^{m}\sum_{j=1}^{n}\Aij^2}$. 
For a symmetric positive definite matrix $\mB$, we refer to $\norm{\vy}_\mB := \sqrt{\langle \vy, \mB \vy \rangle}$ as the $\mB$-norm of a vector $\vy$.
When a vector norm is used without a subscript, it is the standard Euclidean norm $\norm{\vx} = \sqrt{\sum_{j=1}^n x_j^2}$.

\subsection{Contributions}
\label{contributions}

%\jdm{Are there any more punchlines we can throw in here?}
Our main contributions are threefold: firstly, we show that despite REK involving both sketches and projections in its updates, it is likely not a member of the SAP family of methods when considering applying SAP to a suitable embedded problem. We develop intuition to suggest that it is unlikely that the SAP framework allows for paired-update methods, such as REK, where one update behaves independently of the other. This indicates that the SAP framework is not as all-inclusive as believed, and suggests there is a more general framework to be found that encompasses these paired-update methods.

Secondly, we introduce a new family of methods, SAP-REK$(\epsilon)$, contained in the SAP framework. We show that upon taking $\epsilon \to 0$ one recovers the REK update, thus showing that REK is, in a sense, a limit point of SAP. However, one does not recover a meaningful convergence result for REK through this limit, again suggesting the theory has gaps to be filled.

Finally, we give an extended analysis of SAP-REK$(\epsilon)$. Applying theory from SAP, we derive a bound on the convergence rate of SAP-REK($\epsilon$) for $\epsilon>0$ analogous to the corresponding convergence result for REK. Additionally, we explore the empirical convergence of SAP-REK($\epsilon$) in the limit as $\epsilon\to 0$ and observe that REK typically outperforms SAP-REK($\epsilon$) after enough iterations have passed.
% it being of similar flavor: a row and a column are sketched, and we perform two projections using the sketched system.
% provide discussion about the methodological effects that taking the aforementioned limit has on the method. 
% This provides a full bridge between Sketch-and-Project methods and REK, which we believe may be generalized further to produce other ``extended" projection methods as limit points of this theory.

\section{Background}
\label{background}

In this section we give detailed summaries of REK and SAP. 
In particular, we provide explanations of the methods, pseudocode, and the strongest known convergence results.

\subsection{Randomized Extended Kaczmarz}
\label{rekbackground}
REK (\cref{REKcode}) \citep{rek} finds the least squares solution to an inconsistent system $\mA \vx = \vb$ by employing a paired update scheme where two sequences of iterates are produced. 
Firstly, the projection of $\vb$ onto $\rowspace(\mA)^\perp$, $\vb_{\rowspace(\mA)^\perp}$, is approximated by a sequence of iterates $\zk$. 
The iterates $\zk$ are generated by applying RK to the consistent system $\mA^\trans \vz = \mathbf{0}$ with the initial guess $\zinit = \vb$.
At each iteration, a column $\Aj$ is randomly sampled from $\mA$ with probability $\normsq{\Aj}/\normsq{\mA}_F$ and the RK update is applied
\begin{equation*}
    \zkpo = \zk - \frac{\Aj^\trans \zk}{\norm{\Aj}^2}\Aj,
\end{equation*}
where $j$ is the index of the chosen column.

% \jdm{Add the sampling probabilities to the algorithm below.}
\begin{algorithm}
	\caption{Randomized Extended Kaczmarz}\label{REKcode}
	\begin{algorithmic}[1]
		\Procedure{REK}{$\mA,\vb$, N, $\xinit$}
		\State Initialize $\zinit = \vb$
		\For{$k = 1, 2, \ldots, N-1$}
		    \State Sample $i$ from $\{1, 2, \ldots, m\}$ with probability $\normsq{\Ai}/\normsq{\mA}_F$
		    \State Sample $j$ from $\{1, 2, \ldots, n\}$ with probability $\normsq{\Aj}/\normsq{\mA}_F$
    		\State Update $\zkpo = \zk - \frac{\Aj^\trans \zk}{\norm{\Aj}^2}\Aj$
    		\State Update $\xkpo = \xk - \frac{\Ai\xk - \bi + \zkpoi}{\norm{\Ai}^2} \Ai^\trans$
		\EndFor
		\State \Return $\vx^N$
		\EndProcedure
	\end{algorithmic}
\end{algorithm}
%\jdm{Traditionally, $\xinit$ should be taken as an argument in all of the algorithms.}

As $\zk \to \vb_{\rowspace(\mA)^\perp}$, the inconsistent system $\mA \vx = \vb - \zk$ approaches the consistent system $\mA \vx = \vb - \vb_{\rowspace(\mA)^\perp}$.
Moreover, the least norm solution to $\mA \vx = \vb - \vb_{\rowspace(\mA)^\perp}$ is exactly the least squares solution $\xopt$.
To compute this least norm solution, a second sequence of iterates $\xk$ is generated by applying RK to the intermediate systems $\mA \vx = \vb -\zk$ with the initial guess $\xinit = \mathbf{0}$ (sometimes  referred to as \textit{randomized orthogonal projection} \citep{rek}). 
At each iteration, a row $\Ai$ is randomly sampled from $\mA$ with probability $\normsq{\Ai}/\normsq{\mA}_F$ and the RK update is applied:
% After initializing $\xinit = \mathbf{0}$, $\xkpo$ is formed by performing an iteration of RK on the system $\mA \vx = \vb - \zkpo$: a row $i$ of $\mA$ is selected at random with probability proportional to its Euclidean norm, and $\xk$ is projected onto the corresponding hyperplane $\Ai \vx = \bi - \zki$. In equation form,
\begin{equation*}
    \xkpo = \xk - \frac{\Ai\xk - \bi + \zkpoi}{\norm{\Ai}^2} \Ai^\trans.
\end{equation*}

We note that one may choose any of the $\vz$ iterates to use, since they are computed independently of the $\vx$ iterates, with the most common choices being $\zk$ and $\zkpo$ \citep{rek, du_tighter_2019}. We will consistently use $\zkpo$ in the computation of $\xkpo$ in this paper, and refer the reader to \citep{dumitrescu_relation_2015} for further discussion.

%\jdm{The last two sentences above are clunky. I think it is reasonable to just delete them and sweep the detail under the rug. Alternatively, they could be a good start point to discuss the REK sux paper since the argument of that paper is basically that you could take all of your $\vz$ iterations before you take any $\vx$ iterations.}

%\jdm{We need to discuss the REK sux paper \citep{dumitrescu_relation_2015}.}
% Having described the paired update protocol, we provide full pseudocode for REK in \cref{REKcode}.

%\jdm{Here and throughout, names of authors should be replaced with ``citet" commands. e.g. \citet{rek}.}

% \jdm{I don't think we need the zouzias freris result here. We can give them some credit without stating their result since the Du result is better.}

When proposing the method, \citet{rek} provided a convergence result for REK guaranteeing linear convergence in mean squared error even for inconsistent systems.
% following convergence result for REK, demonstrating that REK is guaranteed to converge linearly in mean squared error 
% \begin{theorem}[\citep{rek}]
% After $k$ iterations of REK, we have the following bound on the error of $\xk$:
% \begin{equation*}
%     \Eb{\norm{\xk - \xopt}^2} \leq \left(1 - \frac{\sigma^2_{\min}(\mA)}{\norm{\mA}_F^2}\right)^{\lfloor k/2 \rfloor}\left(1 + 2\frac{\sigma^2_{\max}(\mA)}{\sigma^2_{\min}(\mA)}\right)\norm{\xopt}^2,
% \end{equation*}
% even for inconsistent systems.
% \end{theorem}
%\jdm{Avoid using citations as part of a sentence unless it is with ``citet".}
Later, \citet{du_tighter_2019} showed the following improved convergence result for REK, which is the tightest currently known bound.
\begin{theorem}[\citet{du_tighter_2019}]\label{thm:rek-convergence}
    Let $\mA \in \mathbb{R}^{m \times n}, \vb \in \mathbb{R}^m$. Let $\xk$ be the $k\thup$ iterate of REK (\cref{REKcode}), $\xopt$ be the least-squares solution of the system $\mA \vx = \vb$. Let us also assume $\xinit \in \text{range}(\mA^\trans)$ and $\zinit \in \vb+ \text{range}(\mA)$. Finally, define $\rho_k = \left(1 -\frac{\sigma^2_{\min}(\mA)}{\norm{\mA}_F^2}\right)^k$. Then we have: 
    \begin{equation*}
        \Eb{\norm{\xk - \xopt}^2} \leq  \rho_k \norm{\xinit - \xopt}^2 + \frac{\rho_k(1-\rho_k)}{\sigma_{\min}(\mA)}\norm{\zinit - (\mI - \mA \mA^\dagger)\vb}^2.
    \end{equation*}
\end{theorem}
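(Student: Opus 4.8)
The plan is to exploit the fact that REK runs two essentially \emph{independent} randomized Kaczmarz processes, coupled only through the right-hand side of the $\vx$-update: the $\vz$-sequence can be analyzed on its own and then fed into the analysis of the $\vx$-sequence as a controlled perturbation.

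\textbf{Step 1 (the $\vz$-iterates).} First I would note that the $\zk$ are exactly RK applied to the consistent system $\mA^\trans \vz = \mathbf{0}$ started from $\zinit$, generated with no reference to the $\vx$-iterates. Their target is $\zopt := (\mI - \mA\mA^\dagger)\vb$, the projection of $\vb$ onto $\colspace(\mA)^\perp = \kernel(\mA^\trans)$. The hypothesis $\zinit \in \vb + \mathrm{range}(\mA)$ ensures $\zinit - \zopt \in \colspace(\mA) = \rowspace(\mA^\trans)$, so \cref{rkconvergence} applies to $\mA^\trans$ and yields $\Eb{\norm{\zk - \zopt}^2} \le \rho_k \norm{\zinit - \zopt}^2$; the rate is again $\rho_k$ since $\sigma_{\min}(\mA^\trans) = \sigma_{\min}(\mA)$ and $\norm{\mA^\trans}_F = \norm{\mA}_F$. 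This already explains the factor $\norm{\zinit - (\mI - \mA\mA^\dagger)\vb}^2$ in the claimed bound.

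\textbf{Step 2 (error recursion for $\vx$).} Writing $\ek := \xk - \xopt$ and using the optimality identity $\mA\xopt = \vb - \zopt$ (equivalently $\Ai\xopt = \bi - z^\star_i$), I would rewrite the $\vx$-update as
\[ \ekpo = \mP_i \ek - \frac{\zkpoi - z^\star_i}{\norm{\Ai}^2}\,\Ai^\trans, \qquad \mP_i := \mI - \frac{\Ai^\trans \Ai}{\norm{\Ai}^2}, \]
so that each step is the ordinary RK projection $\mP_i \ek$ plus a perturbation proportional to the $\vz$-error. Since $\Ai \mP_i = \mathbf{0}$, the perturbation direction $\Ai^\trans$ is orthogonal to $\mP_i \ek$, the cross term drops, and
\[ \norm{\ekpo}^2 = \norm{\mP_i \ek}^2 + \frac{(\zkpoi - z^\star_i)^2}{\norm{\Ai}^2}. \]
An easy induction (using $\xinit \in \mathrm{range}(\mA^\trans)$ and that every update direction lies in $\rowspace(\mA)$) shows $\ek \in \rowspace(\mA)$ for all $k$, which is what licenses the $\sigma_{\min}$ bound below.

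\textbf{Step 3 (expectations and the coupled recursion).} Conditioning on $(\zk,\xk)$ and averaging over the independently sampled row $i$ and column $j$: the first term obeys the standard RK contraction $\E_i\!\left[\norm{\mP_i\ek}^2\right] = \norm{\ek}^2 - \norm{\mA\ek}^2/\norm{\mA}_F^2 \le \rho_1 \norm{\ek}^2$ (here $\ek \in \rowspace(\mA)$ is used via $\norm{\mA\ek}^2 \ge \sigma_{\min}^2(\mA)\norm{\ek}^2$), while the sampling probabilities $\norm{\Ai}^2/\norm{\mA}_F^2$ collapse the perturbation term to $\norm{\mA}_F^{-2}\,\E_j\!\left[\norm{\zkpo - \zopt}^2\right]$. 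Writing $a_k := \Eb{\norm{\xk - \xopt}^2}$ and $c_k := \Eb{\norm{\zk-\zopt}^2}$, taking total expectations gives the coupled recursion $a_{k+1} \le \rho_1 a_k + \norm{\mA}_F^{-2} c_{k+1}$; substituting $c_{k+1} \le \rho_{k+1}c_0$ from Step 1 and unrolling then yields a closed-form bound of the shape $\rho_k\bigl(\norm{\xinit-\xopt}^2 + \text{const}\cdot c_0\bigr)$.

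\textbf{Main obstacle.} The delicate point is matching the sharp constant $\tfrac{\rho_k(1-\rho_k)}{\sigma_{\min}(\mA)}$. Because the $\vz$-error decays at \emph{exactly} the rate $\rho_1$ at which the $\vx$-recursion contracts, the naive unrolling is resonant: each term $\rho_1^{\,k-t}c_t$ is of order $\rho_k c_0$, so the sum is of order $k\rho_k c_0/\norm{\mA}_F^2$, losing an extra factor of $k$ relative to the stated bound (which behaves like $\rho_k/\sigma_{\min}$ as $k\to\infty$). Removing this spurious $k$-factor is the crux, and I expect it requires a finer argument than the scalar recursion above --- for instance tracking the second moment $\Eb{\ek(\ek)^\trans}$ in the right-singular-vector basis so that the fast-decaying error components are not all bounded by the worst-case rate $\rho_1$, or exploiting additional cancellation between the accumulated perturbations and the projections. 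The remaining ingredients (the decoupling of the $\vz$-process, the orthogonality that kills the cross term, and the invariant $\ek \in \rowspace(\mA)$) are routine RK bookkeeping.
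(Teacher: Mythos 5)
First, a structural remark: the paper never proves \cref{thm:rek-convergence} --- it is stated as background and attributed to \citet{du_tighter_2019} --- so your attempt can only be judged against the statement itself, not against an in-paper argument.

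Your Steps 1--3 are correct and constitute the standard REK analysis: the $\vz$-iterates are RK for $\mA^\trans \vz = \mathbf{0}$ and satisfy $\Eb{\normsq{\zk-\zopt}}\le\rho_k\normsq{\zinit-\zopt}$ with $\zopt=(\mI-\mA\mA^\dagger)\vb$; the cross term in the $\vx$-update vanishes pointwise because $\Ai\mP_i=0$; sampling $i$ with probability $\norm{\Ai}^2/\norm{\mA}_F^2$ collapses the perturbation to $\normsq{\zkpo-\zopt}/\norm{\mA}_F^2$; and the invariants $\xk-\xopt\in\rowspace(\mA)$, $\zk-\zopt\in\colspace(\mA)$ follow by induction from the initialization hypotheses. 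Unrolling the coupled recursion gives
\begin{equation*}
    \Eb{\normsq{\xk-\xopt}} \le \rho_k\normsq{\xinit-\xopt} + \frac{k\rho_k}{\norm{\mA}_F^2}\normsq{\zinit-(\mI-\mA\mA^\dagger)\vb}.
\end{equation*}

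The essential point concerns your ``main obstacle'': the factor of $k$ you could not remove \emph{cannot} be removed, because the inequality as printed is false; the obstacle reflects an error in the quoted statement, not a deficiency of your technique. Take $\mA=\mI_{2\times 2}$, $\vb=(1,1)^\trans$, $\xinit=\mathbf{0}$, $\zinit=\vb$, so that $\xopt=\vb$, $\zopt=\mathbf{0}$, $\sigma_{\min}(\mA)=1$, $\norm{\mA}_F^2=2$, $\rho_1=\tfrac12$. For this instance every inequality in your argument is an equality (since $\norm{\mA\ve}^2=\norm{\ve}^2$ for all $\ve$, both the $\vx$-contraction and the $\vz$-decay hold at exactly rate $\rho_1$), so
\begin{equation*}
    \Eb{\normsq{\xk-\xopt}} = 2^{-k}\normsq{\vb} + k\,2^{-(k+1)}\normsq{\vb} = 2^{-k}(2+k),
\end{equation*}
which one can also confirm by direct enumeration ($=1$ at $k=2$). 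The printed bound evaluates to $2\rho_k(2-\rho_k)=2^{-k}\parens{4-2^{1-k}}$, and $2+k>4-2^{1-k}$ for every $k\ge 2$; at $k=2$ the printed bound gives $\tfrac78<1$. Two consequences: (i) your bound with constant $k\rho_k/\norm{\mA}_F^2$ is attained with equality here, so it is the sharpest bound of this shape and no finer argument (per-singular-direction second moments, cancellation between accumulated perturbations and projections) can reach the printed constant --- on the identity all singular directions are identical and there is provably no cancellation; (ii) the statement in the paper must be a misquotation of \citet{du_tighter_2019} (note it is also dimensionally inconsistent: rescaling $(\mA,\vb)\mapsto(c\mA,c\vb)$ leaves the REK iterates and the left-hand side unchanged but scales the second right-hand term by $c$, since $\sigma_{\min}$ appears unsquared). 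The corrected constant should be of the form $\frac{k\rho_k}{\norm{\mA}_F^2}$, which is exactly what your argument delivers; modulo writing out the routine induction and conditional-expectation bookkeeping you sketch, your proof of that corrected statement is complete.
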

%\jdm{We need to make explicit assumptions in each theorem about the system of equations being consistent or inconsistent, define the exact solution, restrict the initial guess, etc. This makes me want to put everything in theorem environments so the assumptions are self-contained.}

%\jdm{It's hard to make precise statements like the one above without being super verbose. We can't say that REK has the same convergence rate as RK since technically we only know that their convergence rates have the same upper bound. We should talk about this at some point.}

%\jdm{Why do we have theorem environments for the results above but not for the RK result in \cref{rkconvergence}? Ideally we would be consistent about what types of things get theorems, but disregard this if there is a reason I'm missing. I commented out the theorem environments above for now.}

\subsection{Sketch-and-Project}
\label{sapbackground}

%\jdm{We need to specify the dimensions correctly here, }

%\jdm{Switch the notation in this section to $\mM \vy = \vc$ for consistency with \cref{bridging}.}
%\jdm{Not sure if it is fair to count $\mB$ and $\cD$ as just two parameters. That seems like pretty dubious accounting.}

The Sketch-and-Project family of methods (SAP) (\cref{SAPcode}), introduced by Gower and Richt\'arik~\citep{gower_randomized_2015}, find the solution to a consistent system $\mM \vy = \vc$, with $\mM \in \R^{m\times n}$ and $\vc \in \R^{m}$, by producing a sequence of iterates $\yinit$, $\yone$, $\ytwo$, $\ldots$ that converge to the solution $\yopt$.
SAP methods are parametrized by a symmetric positive definite matrix $\mB \in \R^{m \times n}$ and a distribution $\cD$ over sketching matrices $\mS$ with $m$ rows.
At each iteration, SAP samples a sketching matrix $\mS \sim \cD$ and projects the iterate $\yk$ onto the solution space of the sketched system $\mS^\trans \mM \vy = \mS^\trans \vc$ with respect to the $\mB$-norm $\norm{\vy}_\mB := \sqrt{\langle \vy, \mB \vy \rangle}$.
That is,
\begin{equation}
\label{eqn:sap-optimization-update}
    \ykpo = \argmin_{\vy} \norm{\vy - \yk}_\mB \qtext{subject to} \mS^\trans \mM \vy = \mS^\trans \vc.
\end{equation}
This update can be formulated explicitly as 
\begin{equation}\label{eqn:sap-update}
    \ykpo = \yk - \mB^{-1} \mM^\trans \mS (\mS^\trans \mM \mB^{-1} \mM^ \trans \mS)^\dagger \mS^\trans \parens{\mM \yk - \vc}.
\end{equation}

%\jdm{The initial guess $\yinit = 0$ does not jive with the initial guess for REK. Do we want to comment on why this is not an issue?}

\begin{algorithm}
	\caption{Sketch-and-Project}\label{SAPcode}
	\begin{algorithmic}[1]
		\Procedure{SAP}{$\mM,\vc, \mathcal{D}, \mB, \yinit, N$}
		\For{k = 1, \ldots, N}
		    \State Sample $\mS \sim \mathcal{D}$
    		\State Update $\ykpo = \yk - \mB^{-1} \mM^\trans \mS (\mS^\trans \mM \mB^{-1} \mM ^\trans \mS)^\dagger \mS^\trans (\mM \yk - \vc)$
		\EndFor
		\State \Return $\vy_N$
		\EndProcedure
	\end{algorithmic}
\end{algorithm}

% The update involves a sketching matrix $\mS \in \mathbb{R}^{m \times q}$, drawn at each iteration from a distribution $\mathcal{D}$, and a positive definite matrix $\mB \in \mathbb{R}^{n \times n}$.

%\jdm{This is where I stopped editing. I'll pick up here next time.}

% An iteration of SAP takes $\ykpo$ to be the closest point to $\yk$ under the $\mB$ norm that satisfies the sketching constraint $\mS^\trans \mM \vy = \mS^\trans \vc$. 
% So we are minimizing over a $\mB$ matrix norm, rather than a euclidean norm. 

For particular choices of the matrices $\mB$ and $\mS$, SAP recovers RK and other well-known randomized iterative methods for solving linear systems \citep{gower_randomized_2015}. 
For example, when $\mB = \mI$ and $\mS = \mI_{:i}$ (with row $i$ being sampled with probability proportional to its Euclidean norm) SAP recovers RK. 
Other choices for $\mB$ and $\mS$ allow SAP to recover block RK \citep{needell2014paved}, randomized coordinate descent \citep{leventhal_randomized_2010}, random pursuit \citep{nesterov2011random}, stochasic Newton \citep{qu2016sdna} and randomized block coordinate descent \citep{richtarik2014iteration}.
In addition to recovering the update formulae for such special cases, the general convergence theorem below recovers the best known convergence results for each of these \citep{gower_randomized_2015}. The SAP framework thus provides a powerful unification of modern theory on iterative projection-based methods for linear systems.
% If $\mM$ is positive definite, choosing $\mB = \mM$ and $\mS= \mI_\colj$ recovers coordinate descent \citep{todo}.
% When we choose $\mB = \mI$ and $\mS$ as row sub-matrices of $\mI$, \cref{eqn:sap-update} recovers the block Kaczmarz update \citep{needell2014paved}.
% We have randomized block Kaczmarz when we choose $\mB = \mI$ and $\mS$ as the first $r$ columns of the identity matrix. 
% We have the least squares version of randomized coordinate descent when $\mB = \mM^\trans \mM$ and $\mS = \mI_\colj$. 

%%\nate{I added random persuit and stochastic newton method.}
%\jdm{Does someone know a good reference for block CD?}
%\jdm{Paragraph above could use another pair of eyes.}

\begin{theorem}[\citet{gower_randomized_2015}]
    \label{thm:sapconvergence}
    Let $\yk$ denote the $k$th iterate of SAP (Algorithm 2), and let $\yopt$ be any solution to $\mM \vy = \vc$. If $\Eb{\mZ}$ is positive definite, then
    \begin{equation}\label{eqn:convrate}
        \Eb{\normsq{\yk - \yopt}_{\mB}} \leq \parens{1-\lambda_{\min}(\mB^{-1/2} \Eb{\mZ}\mB^{-1/2})}^k \normsq{\yinit - \yopt}_\mB,
    \end{equation}
    where $\mZ = \mM^\trans \mS (\mS^\trans \mM \mB^{-1} \mM ^\trans \mS)^\dagger \mS^\trans \mM$.
\end{theorem}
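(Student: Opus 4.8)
The plan is to track the error vector $\ek = \yk - \yopt$ and show that it contracts in the $\mB$-norm at each step. Since $\yopt$ solves $\mM\vy = \vc$, I would first rewrite the residual as $\mM\yk - \vc = \mM\ek$, so that the explicit update \eqref{eqn:sap-update} becomes the linear recursion $\ekpo = \parens{\mI - \mB^{-1}\mZ}\ek$, with $\mZ = \mM^\trans \mS (\mS^\trans \mM \mB^{-1}\mM^\trans\mS)^\dagger\mS^\trans\mM$ exactly as in the statement. This recursion holds verbatim for any fixed choice of solution $\yopt$, since the update itself does not reference $\yopt$; this is what lets the bound hold for \emph{any} solution.

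Next I would compute the one-step change in the squared $\mB$-norm. Writing $\normsq{\ekpo}_\mB = \langle (\mI - \mB^{-1}\mZ)\ek,\ (\mB - \mZ)\ek\rangle$ and expanding, I would use two facts: that $\mZ$ is symmetric (its middle factor is the pseudoinverse of the symmetric PSD matrix $\mS^\trans\mM\mB^{-1}\mM^\trans\mS$), and the idempotence identity $\mZ\mB^{-1}\mZ = \mZ$. With these, the two cross terms each reduce to $\langle \ek, \mZ\ek\rangle$ and cancel, leaving the clean exact identity $\normsq{\ekpo}_\mB = \normsq{\ek}_\mB - \langle \ek, \mZ\ek\rangle$. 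The identity $\mZ\mB^{-1}\mZ = \mZ$ follows from the Moore--Penrose property $\mW^\dagger\mW\mW^\dagger = \mW^\dagger$ with $\mW = \mS^\trans\mM\mB^{-1}\mM^\trans\mS$; geometrically it reflects that the SAP update is the $\mB$-orthogonal projection onto the sketched solution space, which contains $\yopt$, so the decrement is precisely the squared $\mB$-length of the projection step.

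I would then take the expectation conditional on $\ek$ over the random sketch $\mS\sim\cD$, obtaining by linearity $\Eb{\normsq{\ekpo}_\mB \mid \ek} = \normsq{\ek}_\mB - \langle \ek, \Eb{\mZ}\ek\rangle$. To expose the spectral rate I would substitute $\vw = \mB^{1/2}\ek$, which turns $\normsq{\ek}_\mB$ into $\normsq{\vw}$ and $\langle \ek, \Eb{\mZ}\ek\rangle$ into the Rayleigh quotient $\langle \vw,\ \mB^{-1/2}\Eb{\mZ}\mB^{-1/2}\vw\rangle$. Because $\Eb{\mZ}$ is assumed positive definite, the symmetric matrix $\mB^{-1/2}\Eb{\mZ}\mB^{-1/2}$ is positive definite and its Rayleigh quotient is bounded below by $\lambda_{\min}(\mB^{-1/2}\Eb{\mZ}\mB^{-1/2})\normsq{\vw}$. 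This yields the one-step contraction $\Eb{\normsq{\ekpo}_\mB \mid \ek} \le \parens{1 - \lambda_{\min}(\mB^{-1/2}\Eb{\mZ}\mB^{-1/2})}\normsq{\ek}_\mB$, and taking full expectations and iterating via the tower property gives the claimed bound.

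The main obstacle is establishing the idempotence identity $\mZ\mB^{-1}\mZ = \mZ$ rigorously, since it is exactly what collapses the cross terms and produces an \emph{exact} one-step identity rather than a mere inequality; the key is to arrange the pseudoinverse so that $\mW^\dagger\mW\mW^\dagger = \mW^\dagger$ applies cleanly. A secondary point worth checking is that positive definiteness of $\Eb{\mZ}$ forces $\lambda_{\min}(\mB^{-1/2}\Eb{\mZ}\mB^{-1/2}) > 0$, so the per-step factor is genuinely below one and the iteration converges; this is the precise role of the hypothesis. The remaining steps are routine linear algebra and the law of total expectation.
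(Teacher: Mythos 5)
Your proof is correct: the paper itself does not prove \cref{thm:sapconvergence} (it is quoted from \citet{gower_randomized_2015}), and your argument reproduces essentially the standard proof from that reference --- the error recursion $\ekpo = (\mI - \mB^{-1}\mZ)\ek$, the exact one-step identity $\normsq{\ekpo}_\mB = \normsq{\ek}_\mB - \langle \ek, \mZ\ek\rangle$ via symmetry of $\mZ$ and the projection identity $\mZ\mB^{-1}\mZ = \mZ$, followed by conditional expectation, the Rayleigh-quotient bound for $\mB^{-1/2}\Eb{\mZ}\mB^{-1/2}$, and the tower property. The only cosmetic slip is the bookkeeping phrase about the ``two cross terms'' canceling (one cross term cancels against the quadratic term, leaving $-\langle\ek,\mZ\ek\rangle$), but the stated identity and everything downstream of it are right.
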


In order to apply \cref{thm:sapconvergence} to derive a corollary for a special case of SAP, one need only compute the smallest eigenvalue of $\mB^\inv \Eb{\mZ}$. Taking RK as an example, \cref{rkconvergence} results from \cref{thm:sapconvergence} by computing $\lambda_{\min}(\mB^{-1} \Eb{\mZ}) = \sigma_{\min}^2(\mA) / \normsq{\mA}_F$.

\section{Bridging the gap between SAP and REK}
\label{bridging}
% Both of these issues are resolved by considering the following consistent system 
% The SAP framework is setup for solving \emph{consistent} systems of linear equations by taking \emph{row} sketches, whilst solving the system $\mA \vx = \vb$ with REK involves an \emph{inconsistent} system where we must take a row \emph{and} column sketch at each iteration. Both of these issues are resolved by considering the following consistent system 

%\jdm{Make an algorithm environment for SAP-REK.}
%\jdm{I don't think the argument in \cref{rekisnotsap} is complete. Why is SAP with \cref{eqn:rek-s} applied to \cref{eqn:consistent-system} the only option? Especially in light of the observation from the REK sux paper that the role of $\zk$ is somehow related to coordinate descent. I think in order to claim that REK is not a SAP method we would need to show that the update for $\xk$ is impossible to achieve with SAP since that's the part we care about.}
%\jdm{To me, the real dead giveaway is that the auxiliary iteration for $\zk$ does not depend on $\xk$ and that basically requires $\mB$ to be rank deficient and thus not a valid SAP parameter. Anyway, I think we need to make a more abstract argument in \cref{rekisnotsap} that doesn't make so many assumptions about what REK would have to look like if it were in SAP.}
%\jdm{I think this section needs more work before it is worth editing.}

%In this section we prove the link, or lack thereof, between SAP and REK. We show that REK is not itself an SAP method, but can be expressed as the limit point of a family of SAP methods which we introduce.

In this section we embed the problem of expressing REK as a SAP method into a suitable consistent system to which SAP may be applied. We provide intuition that it is unlikely that REK may be expressed as a SAP method, however, we show that REK may in fact be recovered as the limit point (in a certain sense) of a family of SAP methods.

\subsection{A suitable embedding}
\label{rekisnotsap}
To begin, we embed the problem as a consistent system of saddle point type \citep{benzi2005saddle},
\begin{equation}\label{eqn:consistent-system}
    \underbrace{\begin{bmatrix}
        \mA^\trans  &  0 \\
             \mI  & \mA
    \end{bmatrix}}_{\eqdef \mM}
    \underbrace{\zx}_{\eqdef \vy}
    = 
    \underbrace{\begin{bmatrix}
        0 \\ \vb
    \end{bmatrix}}_{\eqdef \vc},
\end{equation}
which has solution $\vecstack{\vb_{\rowspace(\mA)^\perp}}{\xopt}$, where $\vb_{\rowspace(\mA)^\perp}$ is the component of $\vb$ in $\rowspace(\mA)^\perp$. We can write the paired REK updates
\begin{equation*}
    \begin{split}
        \zkpo &= \zk - \frac{\Aj^\trans \zk}{\normsq{\Aj}}\Aj \\
        \xkpo &= \xk - \frac{\Ai \xk - \bi + \zkpoi}{\normsq{\Ai}}\Ai^\trans
    \end{split}
\end{equation*}
as a single update in the spirit of SAP as:
\begin{equation}
\label{rekmatrixupdate}
    \zxkpo
    = 
    \zxk
    - 
    \begin{bmatrix}
        \frac{\Aj}{\normsq{\Aj}} & 0 \\
        0 & \frac{\Ai^\trans}{\normsq{\Ai}}
    \end{bmatrix}
    \parens{
    \begin{bmatrix}
        \Aj^\trans & 0 \\
        \mI_\rowi\parens{\mI - \frac{\Aj\Aj^\trans }{\normsq{\Aj}}} & \Ai
    \end{bmatrix}
        \zxk
        - \rhsi
    }.
\end{equation}

Note that in particular, the $\vz$ updates are independent of the $\vx$ updates. Consider now applying a SAP method with sketching distribution $\mathcal{D}$ and projection matrix $\mB$ to the system given in \cref{eqn:consistent-system}. We believe that since $\mB$ is symmetric, if such a method updates $\vz$ independently of $\vx$, then it must in turn update $\vx$ independently of $\vz$ - hence removing the possibility that REK may be expressed as a SAP method on this system, as the $\vx$ updates for REK certainly depend on $\vz$. This may be shown immediately in the case that $\mB$ is block diagonal (a common scenario for SAP methods, e.g. for RK and randomized block Kaczmarz, see \citep{gower_randomized_2015}), and we leave the more general case for future work.

\begin{prop}
Consider applying a SAP method with sketching distribution $\mathcal{D}$ and projection matrix
\[
\mB = \begin{bmatrix} \mB_1 & 0 \\ 0 & \mB_2 \end{bmatrix}
\]
to the system \ref{eqn:consistent-system}. Then if $\vz$ updates independently of $\vx$, $\vx$ must in turn update independently of $\vz$.
\end{prop}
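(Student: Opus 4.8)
The plan is to write the SAP update \eqref{eqn:sap-update} for the embedded system \eqref{eqn:consistent-system} in explicit $2\times2$ block form, read off the coefficient block multiplying $\xk$ in the $\vz$-update and the coefficient block multiplying $\zk$ in the $\vx$-update, and show these two blocks are transposes of one another up to invertible factors. Since a matrix vanishes if and only if its transpose does, the two independence conditions are then equivalent.

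First I would abbreviate the update as $\zxkpo - \zxk = -\mB^{\inv}\mM^\trans \mW \vr$, where $\vr = \mM\zxk - \vc$ is the residual and $\mW := \mS(\mS^\trans\mM\mB^{\inv}\mM^\trans\mS)^\pinv\mS^\trans$. The crucial structural fact is that $\mW$ is symmetric, being $\mS(\cdot)^\pinv\mS^\trans$ with a symmetric middle factor (the pseudoinverse of a symmetric matrix is symmetric). Writing $\mW$ in blocks $\mW = \begin{bmatrix}\mW_{11} & \mW_{12}\\ \mW_{21}&\mW_{22}\end{bmatrix}$, symmetry gives $\mW_{21}=\mW_{12}^\trans$ with $\mW_{11},\mW_{22}$ symmetric. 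Using $\mM^\trans = \begin{bmatrix}\mA & \mI\\0&\mA^\trans\end{bmatrix}$ together with the block-diagonal form $\mB^{\inv} = \diag(\mB_1^{\inv},\mB_2^{\inv})$, a direct multiplication gives the top block-row of $\mB^{\inv}\mM^\trans\mW$ as $\bigl[\,\mB_1^{\inv}(\mA\mW_{11}+\mW_{21}),\ \mB_1^{\inv}(\mA\mW_{12}+\mW_{22})\,\bigr]$ and the bottom block-row as $\bigl[\,\mB_2^{\inv}\mA^\trans\mW_{21},\ \mB_2^{\inv}\mA^\trans\mW_{22}\,\bigr]$. Here the block-diagonal hypothesis on $\mB$ is exactly what keeps $\mB^{\inv}$ block-diagonal, so that the $\vz$- and $\vx$-updates inherit the top and bottom rows of $\mM^\trans\mW$ separately.

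Next I would substitute the residual blocks $\vr = \begin{bmatrix}\mA^\trans\zk\\ \zk + \mA\xk - \vb\end{bmatrix}$ and isolate the cross-dependencies. Since $\xk$ enters the $\vz$-update only through the term $\mB_1^{\inv}(\mA\mW_{12}+\mW_{22})\mA\xk$, the statement ``$\vz$ updates independently of $\vx$'' is exactly $(\mA\mW_{12}+\mW_{22})\mA = \mathbf{0}$ (the invertible factor $\mB_1^{\inv}$ drops out). Setting $\mN := \mA\mW_{12}+\mW_{22}$, this reads $\mN\mA = \mathbf{0}$. Symmetrically, $\zk$ enters the $\vx$-update only through $\mB_2^{\inv}\mA^\trans(\mW_{21}\mA^\trans+\mW_{22})\zk$, so ``$\vx$ updates independently of $\vz$'' is $\mA^\trans(\mW_{21}\mA^\trans+\mW_{22})=\mathbf{0}$. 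The symmetry relations give $\mW_{21}\mA^\trans + \mW_{22} = (\mA\mW_{12})^\trans + \mW_{22}^\trans = \mN^\trans$, so this condition is $\mA^\trans\mN^\trans = (\mN\mA)^\trans = \mathbf{0}$, i.e. $\mN\mA = \mathbf{0}$ — identical to the first. Both conditions hold sketch-by-sketch, since the blocks $\mW_{ij}$ depend on the realized $\mS$ but the transpose relationship does not, so establishing the $\vz$-condition for every $\mS$ in the support of $\mathcal{D}$ establishes the $\vx$-condition.

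I expect the only genuine subtlety to be pinning down the precise meaning of ``updates independently'': because each SAP update is affine in $\zxk$, I interpret independence of the $\vz$-component from $\vx$ as the vanishing of the coefficient block multiplying $\xk$, required for all iterates and every drawn sketch. Everything else is routine once the symmetry of $\mW$ is in hand; the computation produces a single matrix $\mN\mA$ governing both directions. The fact that a general symmetric $\mB$ would couple the two block-rows through the off-diagonal of $\mB^{\inv}$ — destroying this clean transpose correspondence — is precisely what forces the restriction to block-diagonal $\mB$ and motivates deferring the general case.
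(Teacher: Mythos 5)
Your proof is correct and is essentially the paper's argument: both hinge on the symmetry of the sketched projection matrix together with the block-diagonality of $\mB^{-1}$, so that the coefficient block coupling $\xk$ into the $\vz$-update and the block coupling $\zk$ into the $\vx$-update are transposes of one another (up to the invertible factors $\mB_1^{-1}$, $\mB_2^{-1}$). The paper packages the same computation by working with the symmetric matrix $\mZ = \mM^\trans\mS(\mS^\trans\mM\mB^{-1}\mM^\trans\mS)^\dagger\mS^\trans\mM$ and its off-diagonal blocks $\mZ_2$ and $\mZ_2^\trans$; your matrix $\mN\mA$ is exactly this $\mZ_2$, so the two treatments coincide.
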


\begin{proof}
Fix $k$, and at iteration $k+1$ suppose that $\mS$ is the sketching matrix sampled from $\mathcal{D}$. Recall that the SAP update takes the form
\[
\vecstack{\zkpo}{\xkpo} = \vecstack{\zk}{\xk}- \mB^{-1}\mM^\trans \mS(\mS^\trans \mM \mB^{-1}\mM^\trans \mS)^\dagger \mS^\trans \left(\mM \vecstack{\zk}{\xk} - \vecstack{0}{\vb}\right).
\]
This may then be written as 
\[
\vecstack{\zkpo}{\xkpo} = \vecstack{\zk}{\xk} - \mB^{-1}\mZ \vecstack{\zk}{\xk} + \vc_1,
\]
where $\mZ = \mM^\trans \mS (\mS^\trans\mM\mB^{-1}\mM^\trans\mS)^\dagger \mS^\trans\mM$ and $\vc_1$ is a vector independent of $\zk$ and $\xk$. 
% Now, the REK update may be written as
% \begin{equation}\label{eqn:REKupdate}
% \vecstack{\zkpo}{\xkpo} = \vecstack{\zk}{xk} - \begin{bmatrix} \frac{\mA_\colj \mA_\colj^\trans}{\norm{\mA_\colj}^2} & 0 \\ \frac{\mA_\rowi^\trans \mI_\rowi}{\norm{\mA_\rowi}^2}\left(\mI - \frac{\mA_\colj \mA_\colj^\trans}{\norm{\mA_\colj}^2}\right) & \mA_\rowi \end{bmatrix} \vecstack{\zk}{\xk} + \vc_2,
% \end{equation}
% where $\vc_2$ is again independent of $\zk, \xk$. Thus for a SAP method of this particular type to match REK, we must have
% \[
% \mB^{-1}\mZ = \begin{bmatrix} \frac{\mA_\colj \mA_\colj^\trans}{\norm{\mA_\colj}^2} & 0 \\ \frac{\mA_\rowi^\trans \mI_\rowi}{\norm{\mA_\rowi}^2}\left(\mI - \frac{\mA_\colj \mA_\colj^\trans}{\norm{\mA_\colj}^2}\right) & \mA_\rowi \end{bmatrix}.
% \]
Using that $\mZ$ is symmetric, we write
\[
\mZ = \begin{bmatrix} \mZ_1 & \mZ_2 \\ \mZ_2^\trans & \mZ_3 \end{bmatrix}.
\]
We then have that
\[
\mB^{-1}\mZ = \begin{bmatrix} \mB_1^{-1}\mZ_1 & \mB_1^{-1}\mZ_2 \\ \mB_2^{-1} \mZ_2^\trans & \mB_2^{-1} \mZ_3 \end{bmatrix}.
\]
Now, in order for $\vz$ to update independently of $\vx$, we must have $\mB_1^{-1}\mZ_2 = 0$, thus $\mZ_2 = 0$. Hence we have
\[
\mB^{-1}\mZ = \begin{bmatrix} \mB_1^{-1}\mZ_1 & 0 \\ 0 & \mB_2^{-1}\mZ_3 \end{bmatrix},
\]
and thus $\vx$ updates independently of $\vz$.
\end{proof}

\begin{remark}
Though not the focus of this work, we remark that this proposition excludes a more general class of methods from the SAP framework on this particular embedded problem. In particular, it is not restricted to methods utilizing only a single row and column, hence also excludes a variety of extended block methods.
\end{remark}

\subsection{SAP-REK$(\epsilon)$ and its limit}
\label{SAP-REKlimit}
%\jdm{Make sure anything that is a function of $\epsilon$ has a subscript $\epsilon$. I'm thinking particularly of $\mB_\epsilon$, $\mW_\epsilon$, and $\mZ_\epsilon$. We should make sure to use these subscripted versions in the algorithm environment whenever we get around to adding that.}
We discussed in the previous subsection that it is unlikely that one may express REK as a SAP method for a suitably embedded problem. In this section, we define a new family of SAP methods SAP-REK$(\epsilon)$ depending on a parameter $\epsilon >0 $, and show that in the limit $\epsilon \to 0$ the REK update is recovered. We make use of the same embedding into a consistent system as before \ref{eqn:consistent-system}, and recall that the REK update takes the following form:
\begin{equation}
\label{rekmatrixupdate}
    \zxkpo
    = 
    \zxk
    - 
    \begin{bmatrix}
        \frac{\Aj}{\normsq{\Aj}} & 0 \\
        0 & \frac{\Ai^\trans}{\normsq{\Ai}}
    \end{bmatrix}
    \parens{
    \begin{bmatrix}
        \Aj^\trans & 0 \\
        \mI_\rowi\parens{\mI - \frac{\Aj\Aj^\trans }{\normsq{\Aj}}} & \Ai
    \end{bmatrix}
        \zxk
        - \rhsi
    }.
\end{equation}

We define a related family of SAP methods SAP-REK$(\epsilon)$ depending on a parameter $\epsilon > 0$, with sketching and projection matrices
\begin{equation*}
    \mS = \begin{bmatrix} \mI_\colj & 0 \\ 0 & \mI_{:i} \end{bmatrix} \text{   and   } \mB_\epsilon = \begin{bmatrix} \mI & 0 \\ 0 & \epsilon \mI \end{bmatrix}
\end{equation*}
%\nate{I added the SAP-REK with $\mS$ and $\mB$ plugged in?}
respectively, where $\mS$ is sampled via sampling $i$ and $j$ independently from $[m]$ and $[n]$ respectively, with probabilities
\begin{equation*}
    \mathbb{P}(\text{Select row $i$}) = \frac{1 + \frac{1}{\epsilon}\norm{\Ai}^2}{m + \frac{1}{\epsilon}\norm{\mA}_{\text{F}}^2} \quad \text{  and  } \quad \mathbb{P}(\text{Select column $j$}) = \frac{\norm{\Aj}^2}{\norm{\mA}_{\text{F}}^2}.
\end{equation*}
%\nate{I changed the second reference in the paragraph below}

\begin{algorithm}
	\caption{SAP-REK$(\epsilon)$}\label{SAPREKcode}
	\begin{algorithmic}[1]
		\Procedure{SAP-REK$(\epsilon)$}{$\mA,\vb, \xinit,\zinit, N$}
		\State Set $\mM = \begin{bmatrix}
		    \mA^\trans & 0 \\ \mI & \mA 
		\end{bmatrix}$
		\State Set $\mB_\epsilon = \begin{bmatrix}
    	        \mI & 0 \\ 0 & \epsilon \mI
    	    \end{bmatrix}$
		\For{k = 1, \ldots, N}
		    \State Sample $i \in \{1, \cdots, m\}$ w.p. $\frac{1 + \frac{1}{\epsilon}\norm{\Ai}^2}{m + \frac{1}{\epsilon}\norm{\mA}_{\text{F}}^2}$
    		\State Sample $j \in \{1, \cdots, n\}$ w.p. $\frac{\norm{\Aj}^2}{\norm{\mA}_{\text{F}}^2}$
    		\State Set $\mS = \begin{bmatrix}
    		    \mI_\colj & 0 \\ 0 & \mI_{:i}
    		\end{bmatrix}$
    	    
    		\State Update $\vecstack{\zkpo}{\xkpo} = \vecstack{\zk}{\xk} - \mB_\epsilon^{-1} \mM^\trans \mS(\mS^\trans \mM \mB_\epsilon^{-1} \mM^\trans \mS)^\dagger \mS^\trans \left(\mM \vecstack{\zk}{\xk} - \vecstack{0}{\vb}\right)$
		\EndFor
		\State \Return $\vx^N$
		\EndProcedure
	\end{algorithmic}
\end{algorithm}
Pseudocode for the method is given in \cref{SAPREKcode}. We provide convergence analysis of this family of methods in \cref{SAP-REKanalysis}, whilst here we show that we recover REK in the limit $\epsilon \to 0$.

Indeed, firstly note that in said limit our sampling probabilities become
\begin{equation*}
    \mathbb{P}(\text{Select row $i$}) = \frac{\norm{\Ai}^2}{\norm{\mA}_{\text{F}}^2} \quad \text{  and  } \quad \mathbb{P}(\text{Select column $j$}) = \frac{\norm{\Aj}^2}{\norm{\mA}_{\text{F}}^2},
\end{equation*}
which are exactly the sampling probabilities for REK, as stated in \cref{rekbackground}. Now, computing the SAP update (\ref{eqn:sap-update}) with SAP-REK$(\epsilon)$'s $\mS$ and $\mB_\epsilon$ yields

\begin{align*}
    \zxkpo
    &= \zxk
    - 
    \begin{bmatrix}
        \mA_\colj & \mI_\rowi^\trans \\
        0 & \frac{1}{\epsilon}\mA_\rowi^\trans
    \end{bmatrix}
    \parens{
        \begin{bmatrix}
            \mA_\colj^\trans & 0 \\
            \mI_\rowi & \mA_\rowi
        \end{bmatrix}
        \begin{bmatrix}
            \mA_\colj & \mI_\rowi^\trans \\
            0 & \frac{1}{\epsilon}\mA_\rowi^\trans
        \end{bmatrix}
    }^\dagger
    \parens{
        \begin{bmatrix}
            \mA_\colj^\trans & 0 \\
            \mI_\rowi & \mA_\rowi
        \end{bmatrix}
        \zxk - \rhsi
    } \\
    &= \zxk
    - 
    \begin{bmatrix}
        \mA_\colj & \mI_\rowi^\trans \\
        0 & \frac{1}{\epsilon}\mA_\rowi^\trans
    \end{bmatrix}
    \parens{
        \begin{bmatrix}
            \normsq{\mA_\colj} & \mA_\subij \\
            \mA_\subij & 1 + \frac{1}{\epsilon}\normsq{\mA_\rowi}
        \end{bmatrix}
    }^\dagger
    \parens{
        \begin{bmatrix}
            \mA_\colj^\trans & 0 \\
            \mI_\rowi & \mA_\rowi
        \end{bmatrix}
        \zxk - \rhsi
    } \\
    &= \zxk
    - 
    \begin{bmatrix}
        \mA_\colj & \mI_\rowi^\trans \\
        0 & \frac{1}{\epsilon}\mA_\rowi^\trans
    \end{bmatrix}
    \frac{\begin{bmatrix}
        1 + \frac{1}{\epsilon}\normsq{\mA_\rowi} & -\mA_\subij \\
        -\mA_\subij & \normsq{\mA_\colj}
    \end{bmatrix}}{\normsq{\mA_\colj} \parens{1 + \frac{1}{\epsilon}\normsq{\mA_\rowi}} - \mA_\subij^2}
    \parens{
        \begin{bmatrix}
            \mA_\colj^\trans & 0 \\
            \mI_\rowi & \mA_\rowi
        \end{bmatrix}
        \zxk - \rhsi
    } \\
    &= \zxk
    - 
    \frac{\begin{bmatrix}
        \parens{1 + \frac{1}{\epsilon}\normsq{\mA_\rowi}} \mA_\colj - \mA_\subij \mI_\rowi^\trans & -\mA_\subij \mA_\colj + \normsq{\mA_\colj} \mI_\rowi^\trans \\
        - \frac{1}{\epsilon} \mA_\subij \mA_\rowi^\trans & \frac{1}{\epsilon} \normsq{\mA_\colj} \mA_\rowi^\trans
    \end{bmatrix}}{\normsq{\mA_\colj} \parens{1 + \frac{1}{\epsilon}\normsq{\mA_\rowi}} - \mA_\subij^2}
    \parens{
        \begin{bmatrix}
            \mA_\colj^\trans & 0 \\
            \mI_\rowi & \mA_\rowi
        \end{bmatrix}
        \zxk - \rhsi
    }.
\end{align*}
In the limit of $\epsilon \to 0$, this simplifies to
\begin{align*}
    \zxkpo
    &= \zxk
    - \begin{bmatrix}
        \frac{\mA_\colj}{\normsq{\mA_\colj}} & 0 \\
        - \frac{\mA_\subij \mA_\rowi^\trans}{\normsq{\mA_\rowi}\normsq{\mA_\colj}} & \frac{\mA_\rowi^\trans}{\normsq{\mA_\rowi}}
    \end{bmatrix}
    \parens{
        \begin{bmatrix}
            \mA_\colj^\trans & 0 \\
            \mI_\rowi & \mA_\rowi
        \end{bmatrix}
        \zxk - \rhsi
    }\\
    &= \zxk
    - 
    \begin{bmatrix}
        \frac{\Aj}{\normsq{\Aj}} & 0 \\
        0 & \frac{\Ai^\trans}{\normsq{\Ai}}
    \end{bmatrix}
    \parens{
    \begin{bmatrix}
        \Aj^\trans & 0 \\
        \mI_\rowi\parens{\mI - \frac{\Aj\Aj^\trans }{\normsq{\Aj}}} & \Ai
    \end{bmatrix}
        \zxk
        - \rhsi
    }
\end{align*}
which is exactly the REK update given in \cref{rekmatrixupdate}. Therefore, REK is recovered exactly from the SAP-REK$(\epsilon)$ family when $\epsilon \to 0$, and thus REK is a limit point of the Sketch-and-Project family of methods, in this particular sense.

\begin{remark}
Note that taking $\epsilon \to 0$ truly takes us outside of the SAP regime, as SAP requires the projection matrix $\mB$ to be invertible.
\end{remark}

However, The convergence result from REK is unfortunately not recovered in this limit. Applying \cref{thm:sapconvergence} to SAP-REK$(\epsilon)$, we obtain
\begin{equation*}
    \Eb{\norm{\zk - \vb_{\rowspace(\mA)^\perp}}^2 + \epsilon \norm{\xk - \xopt}^2} \leq \parens{1 - \lambda_{\min}(\mB_\epsilon^{-1/2}\Eb{\mZ}\mB_\epsilon^{-1/2})}^k\left(\norm{\zinit - \vb_{\rowspace(\mA)^\perp}}^2 + \epsilon \norm{\xk - \xopt}^2\right).
\end{equation*} 
Upon taking $\epsilon \to 0$, any information about the convergence of the $\vx$ iterates is lost. In the following section, we examine the convergence of SAP-REK$(\epsilon)$ more thoroughly.

\section{Analysis of SAP-REK$(\epsilon)$}
\label{SAP-REKanalysis}

In this section we analyze the SAP-REK$(\epsilon)$ family of methods. We provide theoretical guarantees on convergence, and show how several steps along the way produce the corresponding results for REK in the limit $\epsilon \to 0$. Firstly, we state our convergence result:

\begin{theorem}
Let $\mA \in \mathbb{R}^{m \times n}, \vb \in \mathbb{R}^m$, $\epsilon > 0$. Denote by $\xopt$ the least-squares solution of the system $\mA \vx = \vb$, and denote by $\xk, \zk$ the $k\thup$ iterates of SAP-REK$(\epsilon)$ applied to said system with initial iterates $\zinit = \vb, \xinit \in \rowspace(\mA)$. Assume for convenience that $\mA$ has been normalized such that $\norm{\mA}_F = 1$. Then we have

%\nate{Should we explicitly define $\mW_\epsilon$?}
\begin{equation*}
    \Eb{\norm{\zk - \vb_{\rowspace(\mA)^\perp}}^2 + \epsilon\norm{\xk - \xopt}^2} \leq \left(1- \lambda_{\min}^{+}(\mW'_\epsilon)\right)^k\left(\norm{\zinit - \vb_{\rowspace(\mA)}}^2 + \epsilon \norm{\xinit - \xopt}^2\right),
\end{equation*}
where $\mM$ is as in \cref{SAP-REKlimit}, and $\lambda_{\min}^{+}(\mW'_\epsilon)$ is given by
\begin{equation*}
    \min\left(\frac{1}{m+\frac{1}{\epsilon}},  \frac{\left(m-2 + \frac{2}{\epsilon}\right)\sigma^2 + 1}{2\left(m + \frac{1}{\epsilon}\right)} - \frac{1}{m + \frac{1}{\epsilon}}\left(\frac{\left(\left(m-2+\frac{2}{\epsilon}\right)\sigma^2 + 1\right)^2}{4} - \frac{1}{\epsilon}\sigma^4\left(\left(m + \frac{1}{\epsilon}\right) - \sigma^2\right)\right)^{1/2}\right),
\end{equation*}
where $\sigma = \sigma_{\min}(\mA)$.
\end{theorem}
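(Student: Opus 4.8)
The plan is to derive the bound as a corollary of the general SAP guarantee (\cref{thm:sapconvergence}), applied with the matrices $\mM$, $\mS$, and $\mB_\epsilon$ of SAP-REK$(\epsilon)$; the only substantive work is evaluating the governing eigenvalue $\lambda_{\min}^{+}(\mW'_\epsilon)$ of $\mW'_\epsilon := \mB_\epsilon^{-1/2}\Eb{\mZ}\mB_\epsilon^{-1/2}$. Because the embedded matrix $\mM$ is rank-deficient, $\Eb{\mZ}$ is only positive semidefinite and \cref{thm:sapconvergence} does not apply verbatim, so the first step is to pass to the subspace on which the iterate error lives. Using $\zinit = \vb$ and $\xinit\in\rowspace(\mA)$, I would verify that this error subspace (the appropriate restriction of the range of $\mB_\epsilon^{-1}\mM^\trans$) is invariant under the update and that on it the contraction factor is controlled by the smallest \emph{positive} eigenvalue $\lambda_{\min}^{+}(\mW'_\epsilon)$, producing the stated decay with the $\mB_\epsilon$-weighted error $\norm{\zk - \vb_{\rowspace(\mA)^\perp}}^2 + \epsilon\norm{\xk-\xopt}^2$ on both sides.

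Next I would make $\mB_\epsilon^{-1/2}\mZ\mB_\epsilon^{-1/2}$ explicit for a single sketch. As in any SAP method this matrix is the orthogonal projection onto $\colspace(\mB_\epsilon^{-1/2}\mM^\trans\mS)$, which here is the span of the two vectors $\tilde\vu_j = \begin{bmatrix}\Aj\\ 0\end{bmatrix}$ and $\tilde\vv_i = \begin{bmatrix}\mI_\rowi^\trans\\ \epsilon^{-1/2}\Ai^\trans\end{bmatrix}$; its closed form is $[\tilde\vu_j\;\tilde\vv_i]\,\mG^\dagger\,[\tilde\vu_j\;\tilde\vv_i]^\trans$ with the $2\times 2$ Gram matrix $\mG = \begin{bmatrix}\normsq{\Aj} & \Aij\\ \Aij & 1 + \frac1\epsilon\normsq{\Ai}\end{bmatrix}$ already appearing in \cref{SAP-REKlimit}. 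The $\vz$-block $\tilde\vu_j\tilde\vu_j^\trans/\normsq{\Aj}$ averages cleanly over $j$ to $\mA\mA^\trans$, and the sampling probabilities are engineered (under $\norm{\mA}_F=1$) so that the row normalizer $1+\frac1\epsilon\normsq{\Ai}$ cancels against the numerator of $\P(\text{row }i)$, which is what produces the overall $\frac{1}{m+\frac1\epsilon}$ prefactor visible in the final expression.

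I would then diagonalize $\mW'_\epsilon$ through the SVD $\mA = \mU_A\Sigma\mV_A^\trans$. In the singular bases the matrix block-diagonalizes into one $2\times 2$ block per nonzero singular value $\sigma_\ell$ — coupling the matched $\vz$- and $\vx$-directions — together with scalar blocks from $\kernel(\mA)$ and $\kernel(\mA^\trans)$ that contribute the constant $\frac{1}{m+\frac1\epsilon}$. Each $2\times 2$ block has eigenvalues given by the roots of a quadratic, and the smaller root evaluated at $\sigma=\sigma_\ell$ is exactly the second argument of the $\min$ in the statement. Taking $\lambda_{\min}^{+}$ then amounts to minimizing over all blocks: I would show the smaller quadratic root is monotone in $\sigma_\ell$ on the relevant range, so the minimizing block is the one with $\sigma = \sigma_{\min}(\mA)$, and compare it against the kernel value $\frac{1}{m+\frac1\epsilon}$ to obtain the stated $\min(\cdot,\cdot)$; a final sanity check recovers the expected REK behavior as $\epsilon\to 0$.

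The main obstacle is the expectation $\Eb{\mZ}$ itself. The determinant $\normsq{\Aj}\parens{1+\frac1\epsilon\normsq{\Ai}} - \Aij^2$ appearing in $\mG^\dagger$ couples the row index $i$ and the column index $j$ through the single entry $\Aij$, so the row average and the column average do not separate and $\Eb{\mZ}$ has no immediate closed form. The crux of the argument is to remove this coupling — the natural route being to replace $\mG^\dagger$ by the decoupled surrogate obtained from its diagonal (equivalently, to split the two-dimensional projection into the rank-one projections onto $\tilde\vu_j$ and $\tilde\vv_i$ separately), giving the factorable matrix $\mW'_\epsilon$ whose row and column averages reduce to expressions in $\mA\mA^\trans$, $\mA^\trans\mA$, and $\mA$ — and then to confirm that the resulting rate is a legitimate (if possibly conservative) bound while carrying out the $2\times 2$ eigenvalue minimization, including the monotonicity in $\sigma_\ell$, carefully enough to pin down the exact minimizing block.
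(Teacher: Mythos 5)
Your scaffolding is the same as the paper's: invoke the SAP rate theorem in its $\lambda_{\min}^{+}$ form, recognize each sample of $\mB_\epsilon^{-1/2}\mZ\mB_\epsilon^{-1/2}$ as the projection onto the span of $\tilde{\vu}_j$ and $\tilde{\vv}_i$ with Gram matrix $\mG$, pass to the SVD so that the expectation block-diagonalizes into $2\times 2$ blocks plus kernel directions, and minimize the smaller root over the blocks. You also isolate exactly the right obstacle: the determinant $\normsq{\Aj}\bigl(1+\tfrac{1}{\epsilon}\normsq{\Ai}\bigr)-\Aij^2$ couples $i$ and $j$. But your decoupling device is not the paper's, and it fails on both counts that matter. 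You replace $\mG^\dagger$ by $\diag(\mG)^{-1}$, i.e.\ you replace the two-dimensional projection by the \emph{sum of the two rank-one projections} onto $\tilde{\vu}_j$ and $\tilde{\vv}_i$. The paper instead keeps the full adjugate of $\mG$ --- including the off-diagonal $-\Aij$ entries --- and only enlarges the scalar denominator, replacing $\det(\mG)$ by the product of diagonal entries $\normsq{\Aj}\bigl(1+\tfrac{1}{\epsilon}\normsq{\Ai}\bigr)$. The paper's move is PSD-monotone: a positive semidefinite matrix divided by a \emph{larger} positive scalar can only decrease, so its surrogate satisfies $\mZ'_{\text{sample}} \preceq \mZ_{\text{sample}}$ pointwise and hence $\mW'_\epsilon \preceq \mW_\epsilon$ after averaging, which is what legitimizes quoting the (weaker) rate. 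Your surrogate admits no such comparison: the sum of rank-one projections onto two non-orthogonal unit vectors has largest eigenvalue $1+\abs{\cos\theta}>1$ on their span, so $P_{\tilde{\vu}}+P_{\tilde{\vv}}$ is neither $\preceq$ nor $\succeq$ the span projection, and your closing step (``confirm that the resulting rate is a legitimate, if conservative, bound'') has no monotonicity argument available --- indeed your surrogate inflates the diagonal blocks (top-left expectation $\mA\mA^\trans+\tfrac{1}{m+1/\epsilon}\mI$ rather than $\tfrac{m-2+1/\epsilon}{m+1/\epsilon}\mA\mA^\trans+\tfrac{1}{m+1/\epsilon}\mI$) and so risks \emph{overestimating} the rate.

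The discarded cross terms are also precisely where the stated formula comes from. After averaging, the $-\Aij$ entries produce the $m-2$ (rather than $m$) coefficient and the off-diagonal blocks $\tfrac{1}{\sqrt{\epsilon}}\mA\bigl(\mI-\mA^\trans\mA\bigr)$, i.e.\ the $\tfrac{1}{\sqrt{\epsilon}}(\sigma-\sigma^3)$ entries of each $2\times 2$ block; those entries are what make the block determinant collapse to $\tfrac{1}{\epsilon}\sigma^4\bigl((m+\tfrac{1}{\epsilon})-\sigma^2\bigr)$, which is exactly the quantity under the square root in the theorem. With your diagonal surrogate the block has trace $\tfrac{(m+2/\epsilon)\sigma^2+1}{m+1/\epsilon}$ and determinant $\tfrac{\sigma^4}{\epsilon(m+1/\epsilon)}$, so the quadratic roots you would obtain do not match the claimed $\lambda_{\min}^{+}(\mW'_\epsilon)$. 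Two smaller points: your description of the kernel contributions is slightly off --- directions in $\kernel(\mA)$ (the $\vx$-side) give \emph{zero} eigenvalues of $\mW'_\epsilon$, excluded by the plus in $\lambda_{\min}^{+}$, while only the $\kernel(\mA^\trans)$ directions contribute $\tfrac{1}{m+1/\epsilon}$; and the monotonicity-in-$\sigma$ claim you plan to prove is asserted in the paper as well, so that part of your plan is aligned.
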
We follow the convergence result for SAP methods provided in \citep{gower_randomized_2015}, namely that given an SAP method with update matrix $\mZ = \mM^\trans \mS \parens{\mS^\trans \mM \mB^\inv \mM^\trans \mS}^\pinv \mS^\trans \mM$, the convergence rate depends on the smallest positive eigenvalue of $\mW := \Eb{\mB^{-1/2}\mZ\mB^{-1/2}}$:
\begin{equation*}
    \Eb{\norm{\vecstack{\zk - \zopt}{\xk - \xopt}}^2_{\mB}} \leq \left(1-\lambda_{\min}^{+}(\mW)\right)^k\norm{\vecstack{\zinit - \zopt}{\xinit - \xopt}}^2.
\end{equation*}

We provide computations of $\mZ$, $\mW$, and the convergence rate for SAP-REK$(\epsilon)$.

Firstly, let $\mZ_{\epsilon}$ be the update matrix $\mM^\trans \mS \parens{\mS^\trans \mM \mB_\epsilon^\inv \mM^\trans \mS}^\pinv \mS^\trans \mM$ when taking $\mB_\epsilon$ and $\mS$ as in \cref{SAP-REKlimit}, and compute this in block matrix form as:

\begin{equation*}
\resizebox{\textwidth}{!}{$ \frac{1}{\norm{\Aj}^2(1 + \frac{1}{\epsilon}\norm{\Ai}^2) - \mA_\subij^2}\begin{bmatrix}(1 + \frac{1}{\epsilon}\norm{\Ai}^2)\Aj \Aj^\trans - \mA_\subij \Aj \mI_\rowi - \mA_\subij \mI_\rowi^\trans \Aj^\trans + \norm{\Aj}^2\mI_\rowi^\trans \mI_\rowi & -\mA_\subij \Aj \Ai + \norm{\Aj}^2\mI_\rowi^\trans \Ai \\ -\mA_\subij \Ai^\trans \Aj^\trans + \norm{\Aj}^2\Ai^\trans \mI_\rowi & \norm{\Aj}^2\mA_{\rowi}^\trans \Ai \end{bmatrix}$}
\end{equation*}
Multiplying on either side by $\mB_\epsilon^{-1/2}$ yields the matrix whose expectation is $\mW_\epsilon$, the matrix whose smallest positive eigenvalue will determine the convergence rate:
\begin{equation*}
\resizebox{\textwidth}{!}{
    $\frac{1}{\norm{\Aj}^2(1 + \frac{1}{\epsilon}\norm{\Ai}^2) - \mA_\subij^2}\begin{bmatrix}(1 + \frac{1}{\epsilon}\norm{\Ai}^2)\Aj \Aj^\trans - \mA_\subij \Aj \mI_\rowi - \mA_\subij \mI_\rowi^\trans \Aj^\trans + \norm{\Aj}^2\mI_\rowi^\trans \mI_\rowi & \frac{1}{\sqrt{\epsilon}}\left(-\mA_\subij \Aj \Ai + \norm{\Aj}^2\mI_\rowi^\trans \Ai\right) \\ \frac{1}{\sqrt{\epsilon}}\left(-\mA_\subij \Ai^\trans \Aj^\trans + \norm{\Aj}^2\Ai^\trans \mI_\rowi\right) & \frac{1}{\epsilon}\norm{\Aj}^2\mA_{\rowi}^\trans \Ai \end{bmatrix}$}
\end{equation*}

\begin{remark}
Note that taking the limit $\epsilon \to 0$ at this point yields

\begin{align*}
\frac{1}{\norm{\Aj}^2\norm{\Ai}^2}\begin{bmatrix}
    \norm{\Ai}^2\Aj \Aj^\trans & 0 \\ 0 & \norm{\Aj}^2 \Ai^\trans \Ai 
\end{bmatrix} 
= \begin{bmatrix}
    \frac{\Aj \Aj^\trans}{\norm{\Aj}^2} & 0 \\
    0 & \frac{\Ai^\trans \Ai}{\norm{\Ai}^2}
\end{bmatrix}
\end{align*}
Taking the largest eigenvalue of the expectation of this (when rows and columns are sampled with probabilities proportional to their norms, as in REK) gives
\begin{align*}
    \eigmin &= \eigmin\left(\E\begin{bmatrix}
    \frac{\Aj \Aj^\trans}{\norm{\Aj}^2} & 0 \\
    0 & \frac{\Ai^\trans \Ai}{\norm{\Ai}^2}
\end{bmatrix}\right) \\
    &= \eigmin\left(\frac{1}{\norm{\mA}_{\text{F}}^2}\begin{bmatrix} \mA \mA^\trans & 0 \\ 0 & \mA^\trans \mA\end{bmatrix}\right) \\
    &= \frac{\sigma_{\min}^2(\mA)}{\norm{\mA}_{\text{F}}^2}.
\end{align*}

Plugging this into \cref{eqn:convrate} yields
\begin{equation}
    \label{eqn:useless}
    \norm{\zk}^2 \leq \left(1 - \frac{\sigma_{\min}^2(\mA)}{\norm{\mA}_{\text{F}}^2}\right)^k \norm{\zinit}^2.
\end{equation}

This tells us nothing about how the iterates $\xk$ converge, but we have recovered the proven convergence rate for randomized orthogonal projection, see \citep{rek}.

\end{remark}

When computing the expectation of our matrix, the $A_{ij}^2$ term in the denominator of the scalar in front is troublesome. To simplify our calculations, we will drop it - this will only reduce the eigenvalues of our matrix and give a slightly weaker estimate on the convergence rate. The resultant matrix is given by

\begin{equation}\label{eqn:bigwmatrix}
\resizebox{\textwidth}{!}{$\frac{1}{\norm{\Aj}^2(1 + \frac{1}{\epsilon}\norm{\Ai}^2)}\begin{bmatrix}(1 + \frac{1}{\epsilon}\norm{\Ai}^2)\Aj \Aj^\trans - \mA_\subij \Aj \mI_\rowi - \mA_\subij \mI_\rowi^\trans \Aj^\trans + \norm{\Aj}^2\mI_\rowi^\trans \mI_\rowi & \frac{1}{\sqrt{\epsilon}}(-\mA_\subij \Aj \Ai + \norm{\Aj}^2\mI_\rowi^\trans \Ai) \\ \frac{1}{\sqrt{\epsilon}}(-\mA_\subij \Ai^\trans \Aj^\trans + \norm{\Aj}^2\Ai^\trans \mI_\rowi) & \frac{1}{\epsilon}\norm{\Aj}^2\mA_{\rowi}^\trans \Ai \end{bmatrix},$}
\end{equation}
and we refer to its expectation as $\mW'_\epsilon$.

To compute the expectation of \eqref{eqn:bigwmatrix}, we recall the row and column sampling distribution given in \cref{SAP-REKlimit}, namely 
\[
\mathbb{P}(\text{Select column $j$}) = \frac{\norm{\Aj}^2}{\norm{\mA}_{\text{F}}^2}
\]
and
\[
\mathbb{P}(\text{Select row $i$}) = \frac{1 + \frac{1}{\epsilon}\norm{\Ai}^2}{m + \frac{1}{\epsilon}\norm{\mA}_{\text{F}}^2}.
\]

We also define the following useful matrices that will come up repeatedly in our expectation computation:

\begin{align*}
\prows &= \frac{1}{m+\frac{1}{\epsilon}\norm{\mA}_{\text{F}}^2}\operatorname{diag}\left(1 + \frac{1}{\epsilon}\norm{\Ai}^2\right) \\
\pcols &= \frac{1}{\norm{\mA}_{\text{F}}^2}\operatorname{diag}\left(\norm{\Aj}^2\right) \\
\drows &= \operatorname{diag}\left(1+ \frac{1}{\epsilon}\norm{\Ai}^2\right)\\
\dcols &= \operatorname{diag}\left(\norm{\Aj}^2\right).
\end{align*}
We now compute the expectation of \eqref{eqn:bigwmatrix} block-by-block. For the upper left block, we compute the expectation as
%\jdm{Here and throughout, make sure equations have left-hand-sides.}
\begin{align*}
\mathbb{E}\left[\text{Top Left Block}\right] &= \mA \pcols \dcols^{-1} \mA^\trans - \mA \pcols \dcols^{-1}\mA^\trans \prows \drows^{-1} - \drows^{-1}\prows \mA \dcols^{-1}\pcols \mA^\trans + \prows \drows^{-1} \\
%\frac{1}{\norm{\mA}_\text{F}^2}\mA\mA^\trans - \frac{1}{\norm{\mA}_\text{F}^2(m+ \frac{1}{\epsilon}\norm{\mA}_\text{F}^2)}\mA\mA^\trans - \frac{1}{\norm{\mA}_\text{F}^2(m + \frac{1}{\epsilon}\norm{\mA}_\text{F}^2)}\mA\mA^\trans + \frac{1}{\norm{\mA}_\text{F}^2(m + \frac{1}{\epsilon}\norm{\mA}_\text{F}^2}\mI = 
&= \frac{m-2+\frac{1}{\epsilon}\norm{\mA}_\text{F}^2}{\norm{\mA}_\text{F}^2(m + \frac{1}{\epsilon}\norm{\mA}_\text{F}^2)}\mA\mA^{\trans} + \frac{1}{m+\frac{1}{\epsilon}\norm{\mA}_\text{F}^2}\mI.
\end{align*}
Next, we look at the top right entry. We obtain the expectation as being
\begin{align*}
\mathbb{E}\left[\text{Top Right Block}\right] &= \frac{1}{\sqrt{\epsilon}}(-\mA \pcols \dcols^{-1}\mA^\trans \prows \drows^{-1}\mA + \prows \drows^{-1}\mA)\\
&= \frac{1}{\sqrt{\epsilon}}\left(-\frac{1}{\norm{\mA}_\text{F}^2(m+\frac{1}{\epsilon}\norm{\mA}_\text{F}^2)}\mA\mA^\trans\mA + \frac{1}{m+\frac{1}{\epsilon}\norm{\mA}_\text{F}^2}\mA \right) \\
&= \frac{1}{m+\frac{1}{\epsilon}\norm{\mA}_\text{F}^2}\frac{1}{\sqrt{\epsilon}}\mA\left(\mI - \frac{1}{\norm{\mA}_\text{F}^2}\mA^\trans \mA\right).
\end{align*}
The bottom left block is the transpose of the top right block, so its expectation is

\[
\mathbb{E}\left[\text{Bottom Left Block}\right] = \frac{1}{m+\frac{1}{\epsilon}\norm{\mA}_\text{F}^2}\frac{1}{\sqrt{\epsilon}}\left(\mI - \frac{1}{\norm{\mA}_\text{F}^2}\mA^\trans \mA\right)\mA^\trans.
\]
Finally, the bottom right entry has expectation
\begin{align*}
\mathbb{E}\left[\text{Bottom Right Block}\right] &= \frac{1}{\epsilon}\mA^{\trans}\prows \drows^{-1}\mA \\
&= \frac{1}{m+\frac{1}{\epsilon}\norm{\mA}_\text{F}^2}\frac{1}{\epsilon}\mA^\trans \mA.
\end{align*}
Together this yields
\begin{equation*}
    \mW'_\epsilon = \frac{1}{m+\frac{1}{\epsilon}\norm{\mA}_\text{F}^2}\begin{bmatrix} \frac{(m -2 + \frac{1}{\epsilon}\norm{\mA}_\text{F}^2)}{\norm{\mA}_\text{F}^2}\mA\mA^\trans + \mI & \frac{1}{\sqrt{\epsilon}}\mA\left(\mI - \frac{1}{\norm{\mA}_\text{F}^2}\mA^\trans \mA\right) \\ \frac{1}{\sqrt{\epsilon}}\left(\mI - \frac{1}{\norm{\mA}_\text{F}^2}\mA^\trans \mA\right)\mA^\trans & \frac{1}{\epsilon}\mA^\trans \mA \end{bmatrix}.
\end{equation*}

To determine the convergence rate of SAP-REK$(\epsilon)$, we must determine the smallest positive eigenvalue of this. For ease of reading, we take $\mA$ to be normalized so that $\norm{\mA}_\text{F} = 1$, but the calculations in the general case are similar. Let $\mA = \mU \Sigma \mV^\trans$ be the singular value decomposition of $\mA$, then we have
\begin{equation*}
    \mW'_\epsilon = \frac{1}{m+\frac{1}{\epsilon}}\begin{bmatrix} \mU((m-2+\frac{1}{\epsilon})\Sigma\Sigma^\trans + I)\mU^\trans & \mU(\frac{1}{\sqrt{\epsilon}}(\Sigma - \Sigma\Sigma^\trans\Sigma))\mV^\trans \\ \mV(\frac{1}{\sqrt{\epsilon}}(\Sigma^\trans - \Sigma^\trans\Sigma\Sigma^\trans))\mU^\trans & \mV(\frac{1}{\epsilon}\Sigma^\trans \Sigma)\mV^\trans 
    \end{bmatrix}
\end{equation*}

This can be factorized further as
\begin{equation*}
    \mW'_\epsilon = \begin{bmatrix} \mU & 0 \\ 0 & \mV \end{bmatrix}\frac{1}{m+\frac{1}{\epsilon}}\begin{bmatrix} (m-2+\frac{1}{\epsilon})\Sigma\Sigma^\trans + I & \frac{1}{\sqrt{\epsilon}}(\Sigma - \Sigma\Sigma^\trans\Sigma) \\ \frac{1}{\sqrt{\epsilon}}(\Sigma^\trans - \Sigma^\trans \Sigma\Sigma^\trans) & \frac{1}{\epsilon}\Sigma^\trans \Sigma \end{bmatrix} \begin{bmatrix} \mU^\trans & 0 \\ 0 & \mV^\trans \end{bmatrix}.
\end{equation*}

We are left to find the eigenvalues of the middle matrix above. After row and column swaps, said matrix may be manipulated such that the diagonal consists of $m-n$ entries with value $\frac{1}{m + \frac{1}{\epsilon}}$, and a collection of $2 \times 2$ blocks of the form
%\jdm{How many of the $\frac{1}{m + \frac{1}{\epsilon}}$ terms are there on the diagonal?}
\begin{equation*}
    \frac{1}{m+\frac{1}{\epsilon}}\begin{bmatrix} (m -2 + \frac{1}{\epsilon})\sigma_i^2 +1 & \frac{1}{\sqrt{\epsilon}}(\sigma_i - \sigma_i^3) \\ \frac{1}{\sqrt{\epsilon}}(\sigma_i - \sigma_i^3) & \frac{1}{\epsilon}\sigma_i^2 \end{bmatrix}
\end{equation*}

where each $\sigma_i$ is a singular value of $\mA$. The eigenvalues of these blocks are, via standard computation, given by

\begin{equation*}
    \frac{\left(m-2 + \frac{2}{\epsilon}\right)\sigma_i^2 + 1}{2\left(m + \frac{1}{\epsilon}\right)} \pm \frac{1}{m + \frac{1}{\epsilon}}\left(\frac{\left(\left(m-2+\frac{2}{\epsilon}\right)\sigma_i^2 + 1\right)^2}{4} - \frac{1}{\epsilon}\sigma_i^4\left(\left(m + \frac{1}{\epsilon}\right) - \sigma_i^2\right)\right)^{1/2}.
\end{equation*}

We seek the smallest of these, so take the negative option in the $\pm$, and note that for fixed $m$ and $\epsilon$ this is an increasing function of $\sigma^2$. Furthermore, these are always positive. Thus, the smallest eigenvalue of $\mW'_\epsilon$, $\lambda_{\min}^{+}(\mW'_\epsilon)$ is given by

\begin{equation*}
    \min\left(\frac{1}{m+\frac{1}{\epsilon}},  \frac{\left(m-2 + \frac{2}{\epsilon}\right)\sigma^2 + 1}{2\left(m + \frac{1}{\epsilon}\right)} - \frac{1}{m + \frac{1}{\epsilon}}\left(\frac{\left(\left(m-2+\frac{2}{\epsilon}\right)\sigma^2 + 1\right)^2}{4} - \frac{1}{\epsilon}\sigma^4\left(\left(m + \frac{1}{\epsilon}\right) - \sigma^2\right)\right)^{1/2}\right),
\end{equation*}

where $\sigma$ is the smallest singular value of $\mA$. Plugging this into the standard SAP convergence result \cref{thm:sapconvergence} yields

\begin{equation*}
    \Eb{\norm{\vecstack{\zk}{\xk} - \vecstack{\vb_{\rowspace(\mA)^\perp}}{\xopt}}_\mB^2} \leq \left(1- \lambda_{\min}^{+}(\mW'_\epsilon)\right)\norm{\vecstack{\zinit}{\xinit} - \vecstack{\vb_{\rowspace(\mA)^\perp}}{\xopt}}_\mB^2
\end{equation*}

This is equivalent to our claimed convergence result:

\begin{equation}
\label{eq:convergence_result}
\Eb{\norm{\zk - \vb_{\rowspace(\mA)^\perp}}^2 + \epsilon\norm{\xk - \xopt}^2} \leq \left(1- \lambda_{\min}^{+}(\mW'_\epsilon)\right)^k\left(\norm{\zinit - \vb_{\rowspace(\mA)^\perp}}^2 + \epsilon \norm{\xinit - \xopt}^2\right)
\end{equation}

\begin{remark}
We note that in this result, sending $\epsilon$ to 0 directly again recovers \cref{eqn:useless}. One may also be inclined to believe that sending $\epsilon$ to infinity may recover a similar (and thus more useful) statement about the convergence of the $\vx$ iterates - however, our theoretical convergence rate becomes zero in such a limit. Furthermore, taking very large $\epsilon$ will weight that $\vx$ updates so strongly that $\vz$ will update extremely slowly, harming convergence to the least squares solution. We expand more upon this phenomenon in our experiments section.
\end{remark}

\section{Experiments}
\label{experiments}

%\jdm{Get rid of the subscript 2 anywhere that it still appears on a norm ($\normsq{\cdot}_2$ should be $\normsq{\cdot}$ throughout), e.g. \cref{fig:epsilon}.}
%\jdm{Let's be consistent about how we do subfigures. At the moment, some of them are subfloats and others are subfigures. Whatever we choose, ideally each subfigure would get its own caption.}
%\nate{I couldn't figure out how to do three subfigures on one line so I switched to subfloats.}
%\jdm{How many trials are we running and how are we generating $\vb$? Fill in the details below.}
%\jdm{Change ``$\epsilon=\ldots$" in the legends to ``SAP-REK$(\epsilon=\ldots)$"}
%\jdm{Add more iterations to the coherent experiment in \cref{fig:few_it} until it is clear whether the $\epsilon=0.0001$ levels off like the others. In the Gaussian image, it looks like you took enough iterations that the REK confidence interval is fully past $10^{-14}$. That seems like a good number of iterations. Judging from some of the later figures, it'd be around 1,200 iterations or so.}
%\jdm{Don't spell out epsilon, always use the symbol $\epsilon$}
%\jdm{Pick your favorite notation for a uniform distribution below}

We perform experiments using random matrices, denoted $\mA$, of size $200 \times 10$ (unless otherwise stated) generated with i.i.d. entries that are either standard Gaussian or uniform, denoted $\mathcal{U}[0,1)$.
Matrices with Gaussian entries are labeled ``Gaussian", while matrices with uniform entries are labeled ``coherent".
For each experiment we run 50 trials.
For each trial, we generate $\vb$ by sampling from $\mathcal{U}[0, 1)$ distribution and apply REK and SAP-REK($\epsilon$) to solve for the least squares solution to the inconsistent system $\mA \vx = \vb$. We furthermore compute $\zopt := \vb_{\rowspace(\mA)^\perp}$.
Depending on the particular experiment, we allow each method to run up to 10,000 iterations.

We perform two types of experiments.
\begin{enumerate}
    \item In the first type of experiment, we plot the mean squared error $\normsq{\xk-\xopt}$, or one of the analogous quantities $\normsq{\zk - \zopt}$ and $\normsq{\zk - \zopt} + \epsilon \normsq{\xk - \xopt}$, as a function of the iteration $k$. We use various fixed values of $\epsilon$ for SAP-REK$(\epsilon)$ ranging from 0.01 to 100 or from 0.0001 to 1 depending on the experiment. Results from such experiments are shown in \cref{fig:few_it_big}, \cref{fig:few_it}, \cref{fig:xs}, \cref{fig:zs}, \cref{fig:xs_zs}.
    \item In the second type of experiment, we plot the mean squared error $\normsq{\xk-\xopt}$ as a function of $\epsilon$. We use various fixed numbers of iterations $k=2500, 5000, 7500, 10000$. We also plot the value of $\lambda_{\min}^+(\mW_\epsilon)$ as a function of $\epsilon$ for Gaussian and coherent matrices. Results from such experiments are shown in \cref{fig:epsilon}.
\end{enumerate}
In each experiment plot, we add a shaded region behind the mean squared error spanning from the $5\thup$ smallest to the $5\thup$ largest observed values of the squared error.
These shaded regions provide some insight about the variance of the squared error.

In \cref{fig:few_it_big} and \cref{fig:few_it}, we study the convergence of SAP-REK$(\epsilon)$ for small $\epsilon$ to see whether its performance approaches that of REK as $\epsilon \to 0$.
In particular, we measure the squared error $\normsq{\xk - \xopt}$ as a function of the iteration $k$ for various values of $\epsilon = 0.0001, 0.001, 0.01, 0.1, 1$.
% All lines other than REK represent values from SAP-REK methods labeled by their $\epsilon$ value.
% \cref{fig:few_it} shows $\| \xk - \xopt \|^2$ at each iteration for REK and SAP-REK with small Epsilon values. 
All of the methods converge slower for the coherent matrix than the Gaussian matrix since the coherent matrix has worse conditioning. 
Also, we see that the convergence of SAP-REK$(\epsilon)$ is similar to that of REK initially, but SAP-REK$(\epsilon)$ plateaus before converging to the solution $\xopt$.
This plateau occurs closer to machine precision as $\epsilon \to 0$ with no noticeable impact on the convergence rate prior to the plateau.
This behavior suggests that REK is preferable to SAP-REK$(\epsilon)$ for small values of $\epsilon$.

%\jdm{The behavior in \cref{fig:few_it} that the convergence rate we derived for SAP-REK should be very bad relative to the rate for REK. Do we want to mention this?}

\captionsetup[subfigure]{labelformat=empty}

\begin{figure}[H]
\centering
\begin{subfigure}{.48\textwidth} 
  \includegraphics[width=\linewidth]{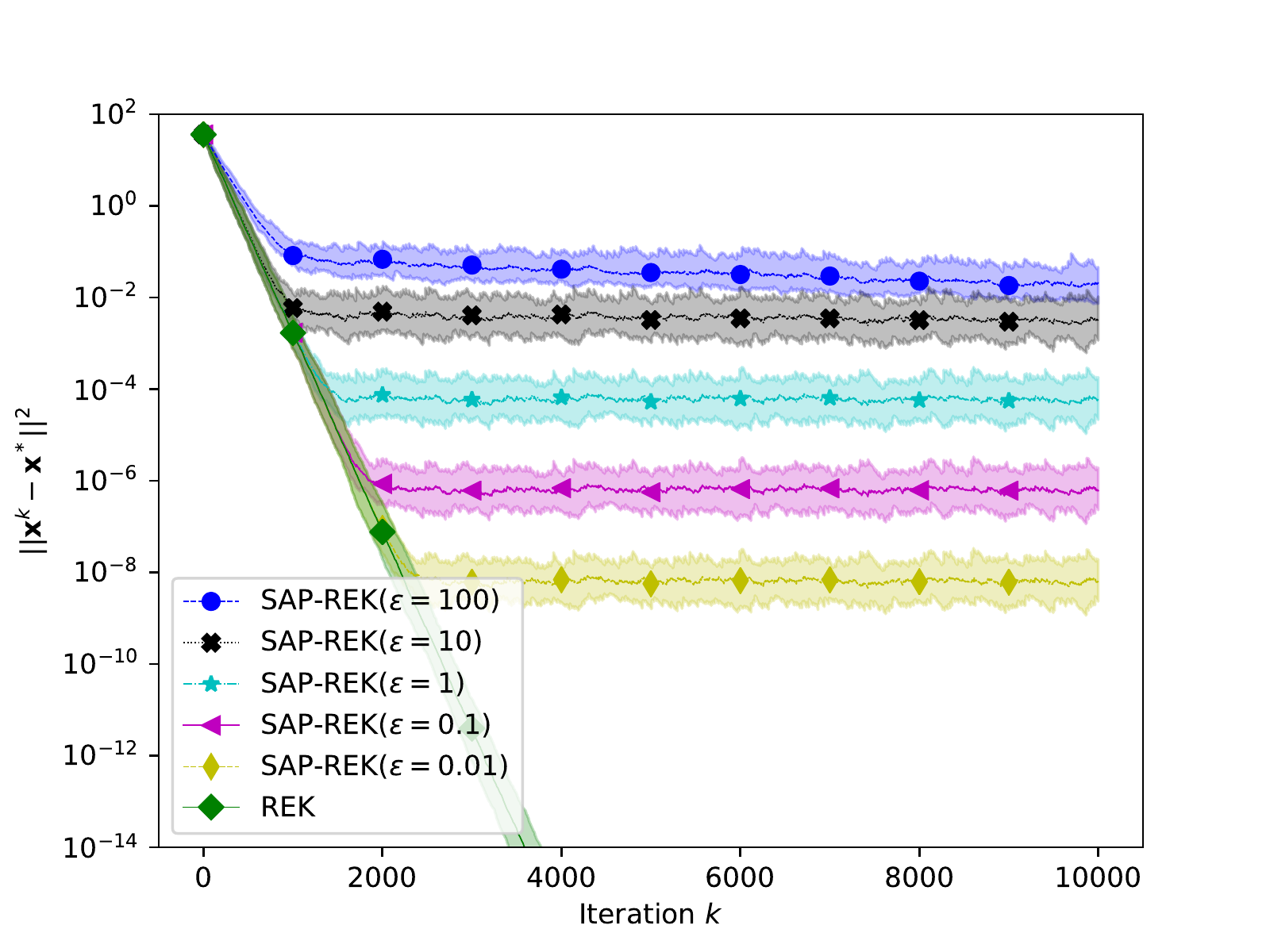}
  \caption{$5000 \times 100$ Gaussian matrix}
%   \label{fig:gauss_rek2}
\end{subfigure}
\begin{subfigure}{.48\textwidth}  
  \includegraphics[width=\linewidth]{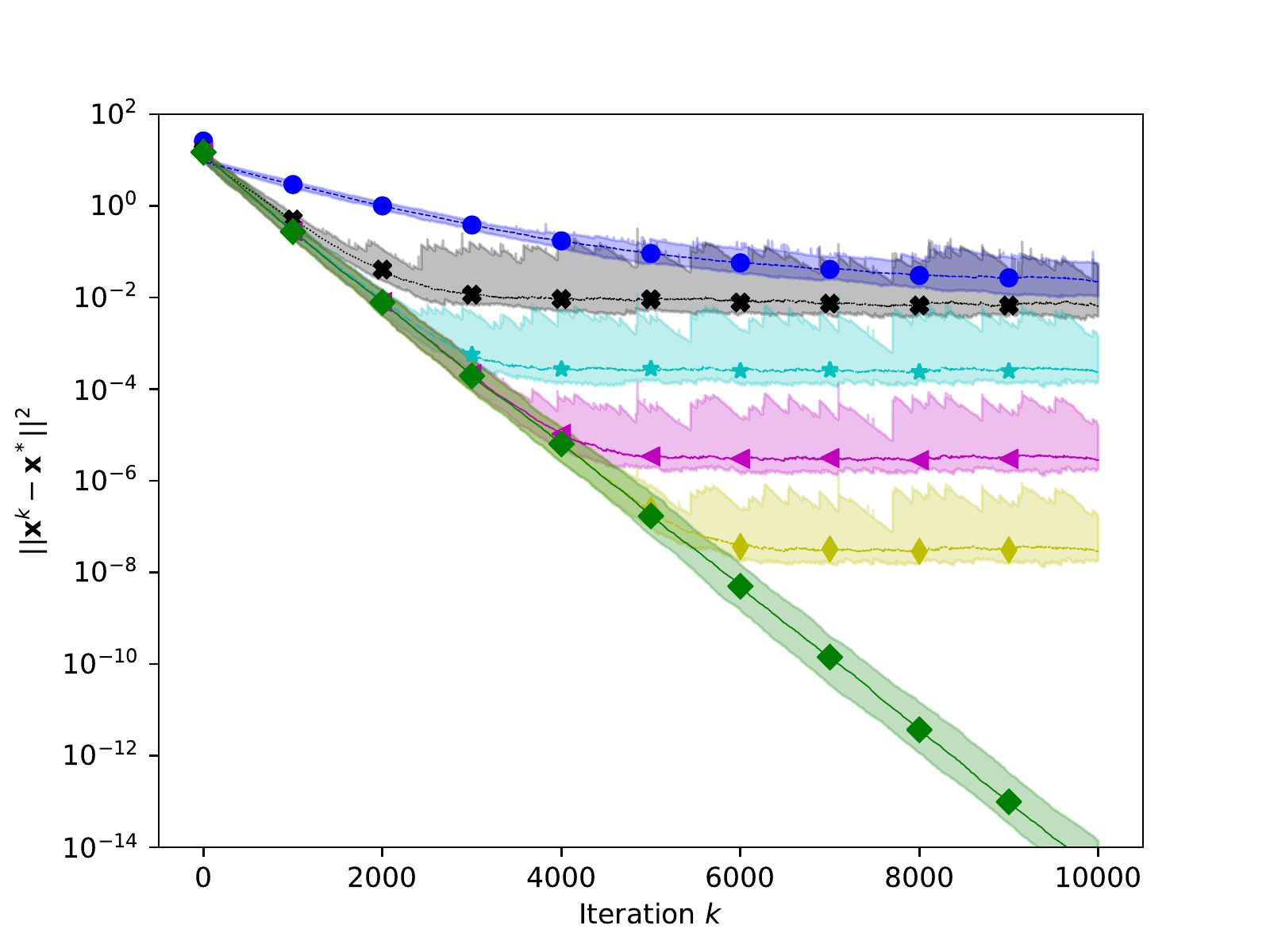}
  \caption{$5000 \times 100$ coherent matrix}
% \label{fig:coherent_rek1}
\end{subfigure}
% \label{fig:coherent_rek1}
% \begin{subfigure}{.18\textwidth}  
%   \includegraphics[width=\linewidth]{}
% \end{subfigure}
\caption{The convergence rate of the $\vx$ iterates for different values of $\epsilon$ in SAP-REK$(\epsilon)$. We run the algorithms for $1000$ iterations for the Gaussian matrix and coherent matrix. Notice that each SAP-REK$(\epsilon)$ hits a convergence plateau.}
  \label{fig:few_it_big}
\end{figure}

\captionsetup[subfigure]{labelformat=empty}

\begin{figure}[H]
\centering
\begin{subfigure}{.48\textwidth} 
  \includegraphics[width=\linewidth]{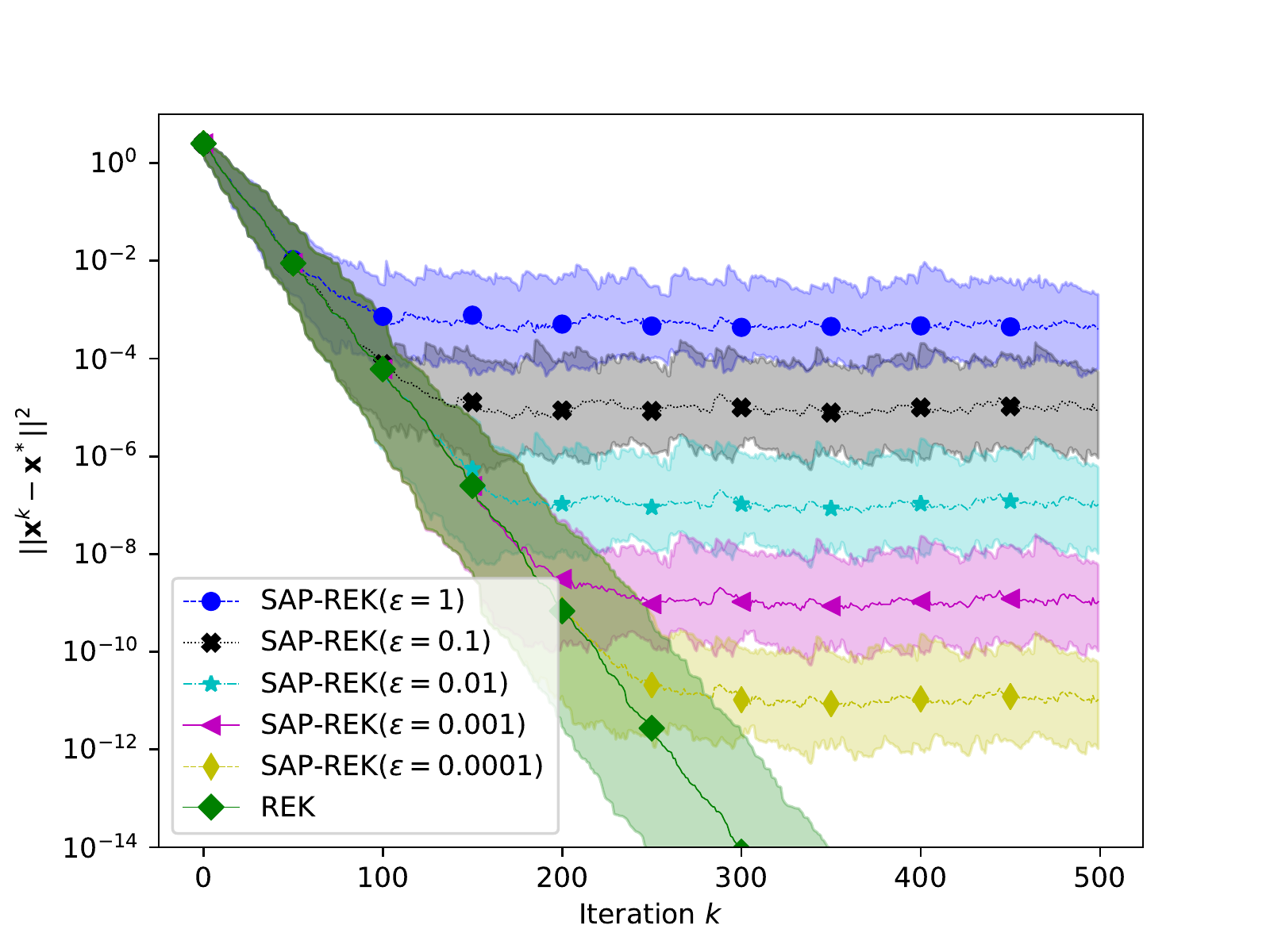}
  \caption{$200 \times 10$ Gaussian matrix}
%   \label{fig:gauss_rek2}
\end{subfigure}
\begin{subfigure}{.48\textwidth}  
  \includegraphics[width=\linewidth]{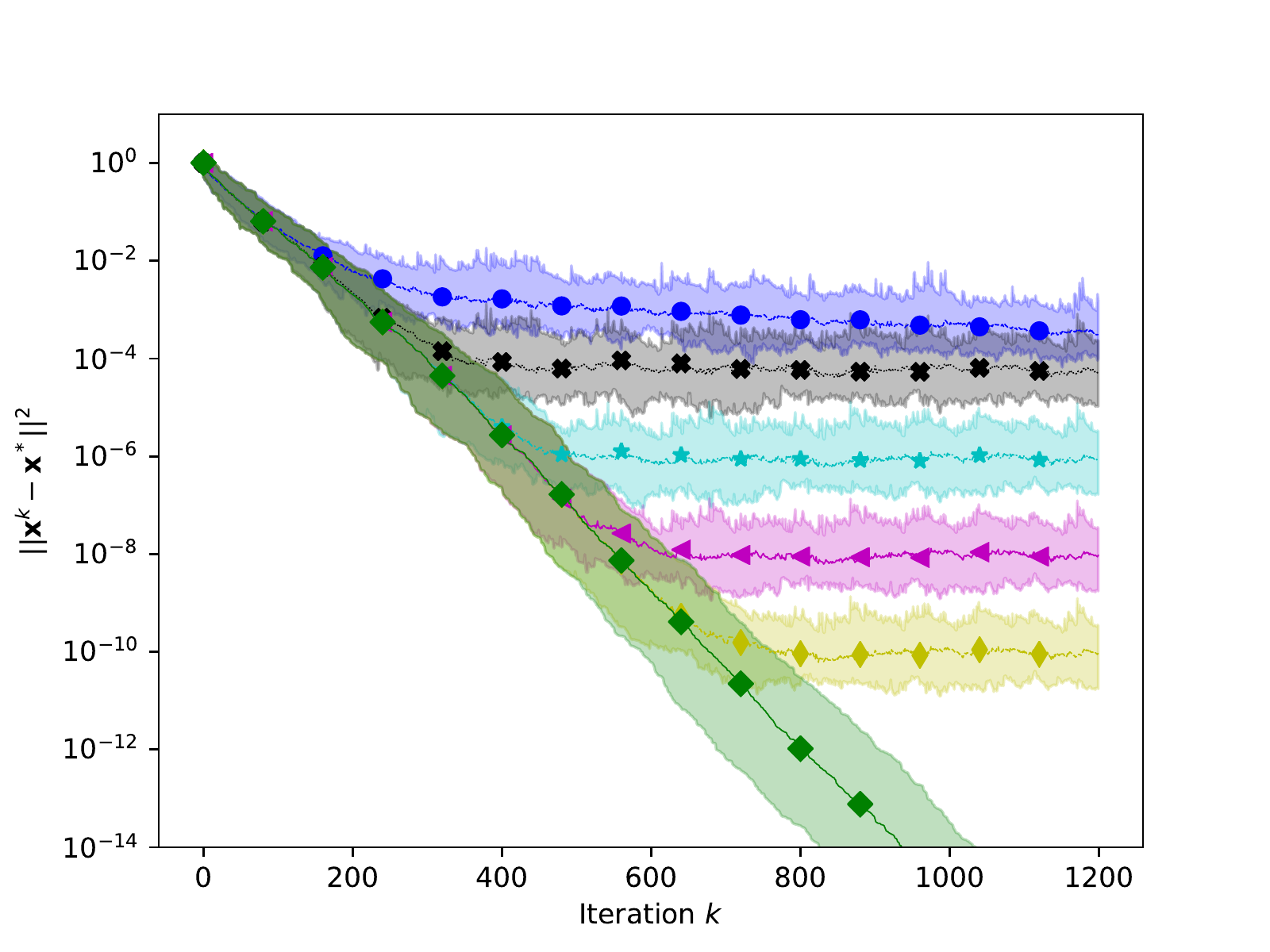}
  \caption{$200 \times 10$ coherent matrix}
% \label{fig:coherent_rek1}
\end{subfigure}
% \label{fig:coherent_rek1}
% \begin{subfigure}{.18\textwidth}  
%   \includegraphics[width=\linewidth]{figures/legend_few_it.pdf}
% \end{subfigure}
\caption{The convergence rate of the $\vx$ iterates for different values of $\epsilon$ in SAP-REK$(\epsilon)$. We run the algorithms for $500$ iterations for the Gaussian matrix and $1200$ iterations for the coherent matrix. Notice that each SAP-REK$(\epsilon)$ hits a convergence plateau similar to that of the $5000 \times 100$ matrices.}
  \label{fig:few_it}
\end{figure}

%\jdm{The more I think about it, the less the results in \cref{fig:espilons} make sense in context of the theory from \cref{SAP-REKanalysis}. We should be guaranteed exponential convergence}

We proceed with the rest of our experiments using a $200 \times 10$ matrix in order to understand the convergence of SAP-REK$(\epsilon)$ after many more iterations. This investigation is done with the smaller matrix mainly for efficiency since the convergence of SAP-REK$(\epsilon)$ behaves similarly on a small matrix with further iterations (\cref{fig:few_it}) as it does on a large matrix with more iterations (\cref{fig:few_it_big}).

In \cref{fig:epsilon}, we study the convergence of SAP-REK$(\epsilon)$ after many iterations to see whether the algorithm eventually overcomes the plateau observed in \cref{fig:few_it}.
In particular, we measure the squared error $\normsq{\xk-\xopt}$ at a fixed iteration $k$ as a function of the parameter $\epsilon \in [10^{-5}, 10^4]$.
In some experiments we discovered that SAP-REK$(\epsilon)$ for $\epsilon = 1,10,100$ eventually converge, whereas the smaller and larger $\epsilon$ values reach a convergence plateau. This may be due to the $x$ updates being weighted more in SAP-REK$(\epsilon > 1)$. However, when $\epsilon$ is weighted too much (eg. $\epsilon > 100$ for a coherent matrix) we continue to see the $\epsilon$ plateau. 

The most overall convergent SAP-REK$(\epsilon)$ methods out of the $\epsilon$ sampled in \cref{fig:epsilon} were SAP-REK$(\epsilon = 100)$ for the Gaussian matrix and SAP-REK$(\epsilon = 10)$ for coherent matrix. We show this phenomenon is consistent with the convergence result in \cref{eq:convergence_result} by plotting $\lambda^+_{\min}(\mW_\epsilon)$ as a function of $\epsilon$ (respectively labeled Lambda and Epsilon in figure \cref{fig:epsilon}). For the Gaussian matrix, $\lambda^+_{\min}(\mW_\epsilon)$ has a local maximizer near $\epsilon = 100$. For the coherent matrix, $\lambda^+_{\min}(\mW_\epsilon)$ has a local maximizer near $\epsilon = 10$. This suggests that the larger $\lambda^+_{\min}(\mW_\epsilon)$, the faster the convergence of SAP-REK$(\epsilon)$. A larger $\lambda^+_{\min}(\mW_\epsilon)$ corresponds to a smaller $1-\lambda^+_{\min}(\mW_\epsilon)$.  \cref{eq:convergence_result} tells a smaller $1-\lambda^+_{\min}(\mW_\epsilon)$ forces $\Eb{\norm{\zk - \zopt}^2 + \epsilon\norm{\xk - \xopt}^2}$ to be smaller for larger $k$. This also suggests that $\norm{\xk - \xopt}^2$ will be smaller for a larger $\lambda^+_{\min}(\mW_\epsilon)$.

\begin{figure}[H]
    \begin{subfigure}{.48\textwidth}  
        \includegraphics[width=\linewidth]{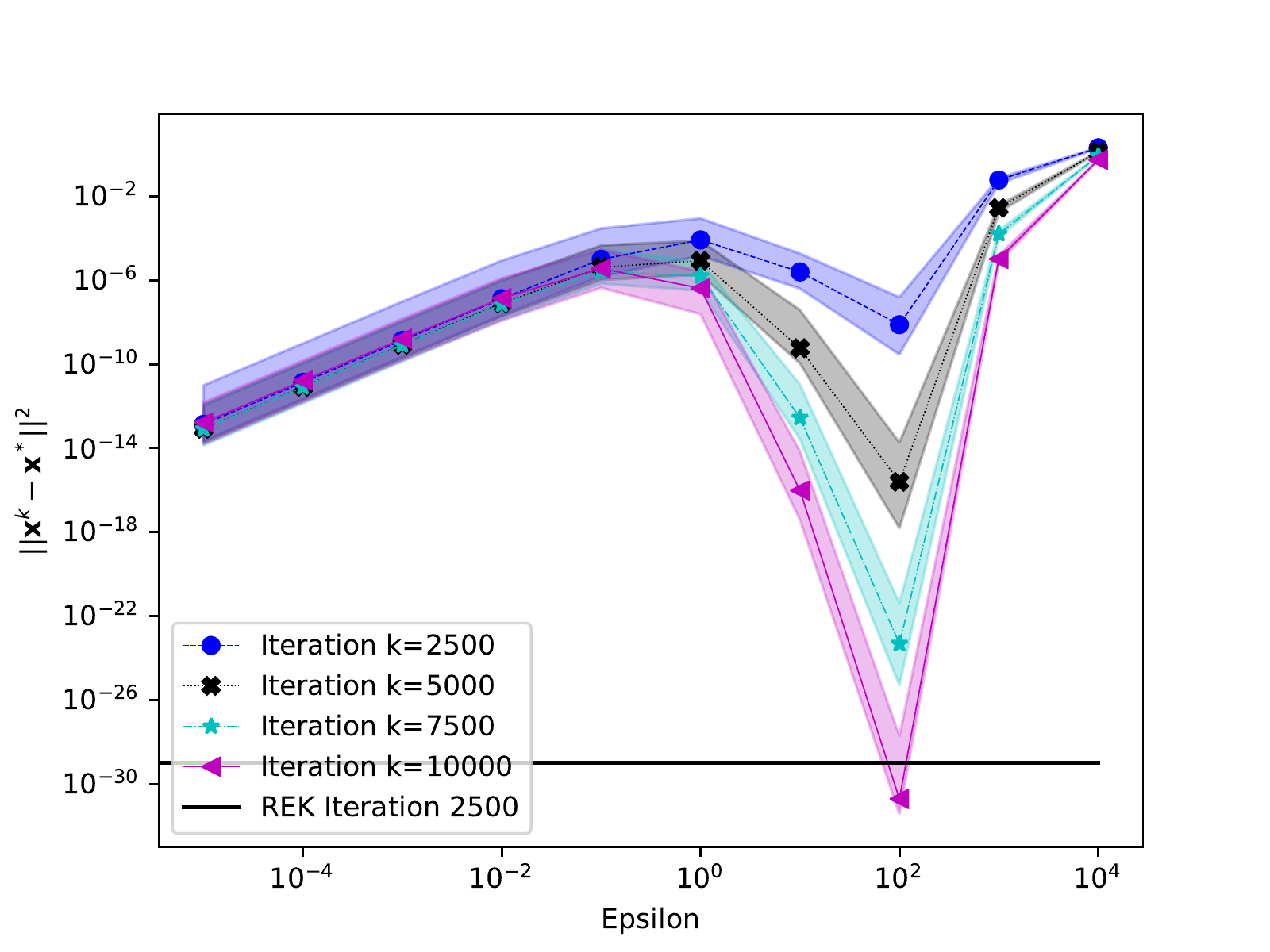}
    \end{subfigure}
    \begin{subfigure}{.48\textwidth} 
        \includegraphics[width=\linewidth]{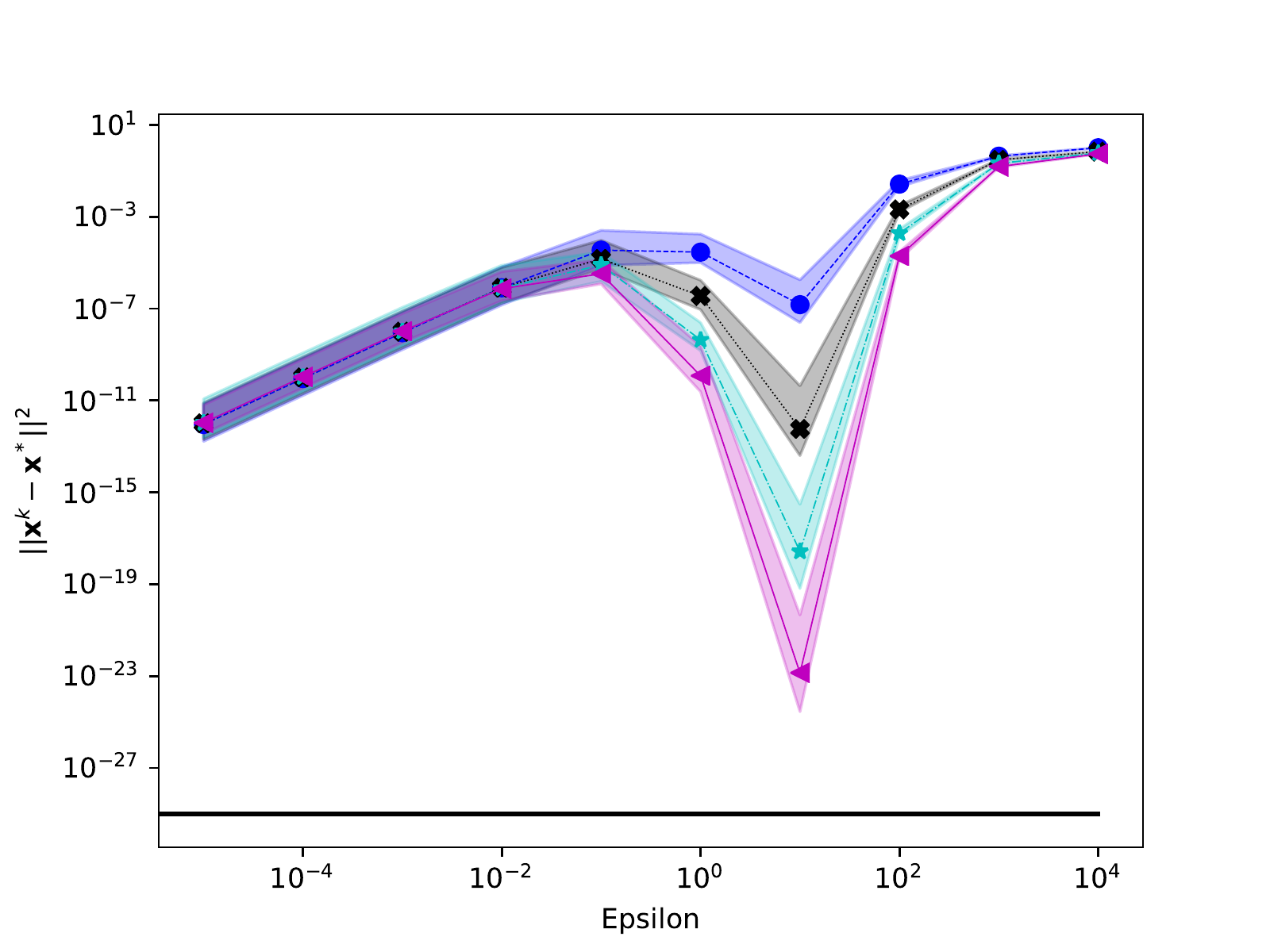}
    \end{subfigure}
    \newline
    \begin{subfigure}{.48\textwidth} 
        \includegraphics[width=\linewidth]{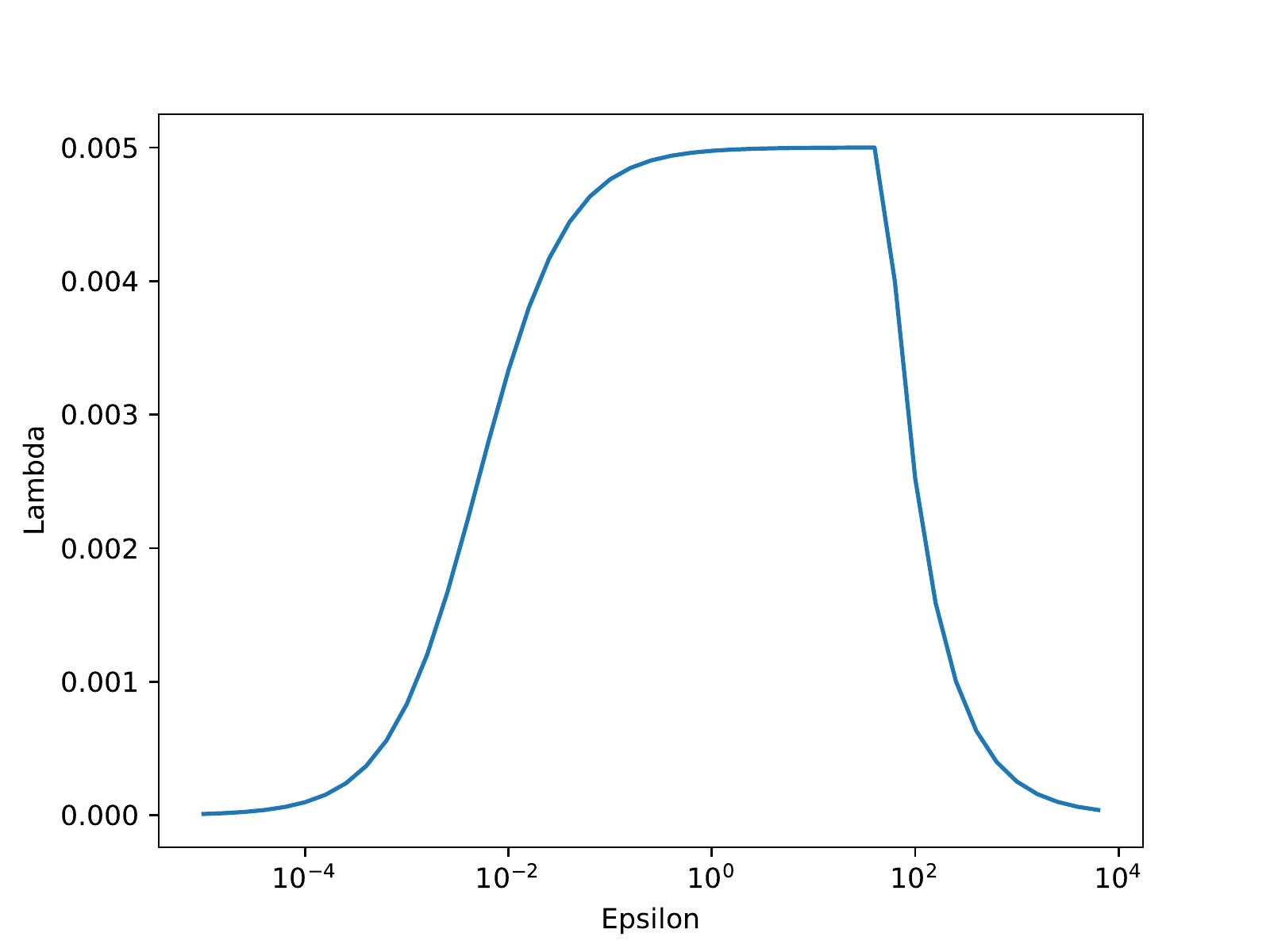}
        \caption{$200 \times 10$ Gaussian matrix.}
    \end{subfigure}
    \begin{subfigure}{.48\textwidth} 
        \includegraphics[width=\linewidth]{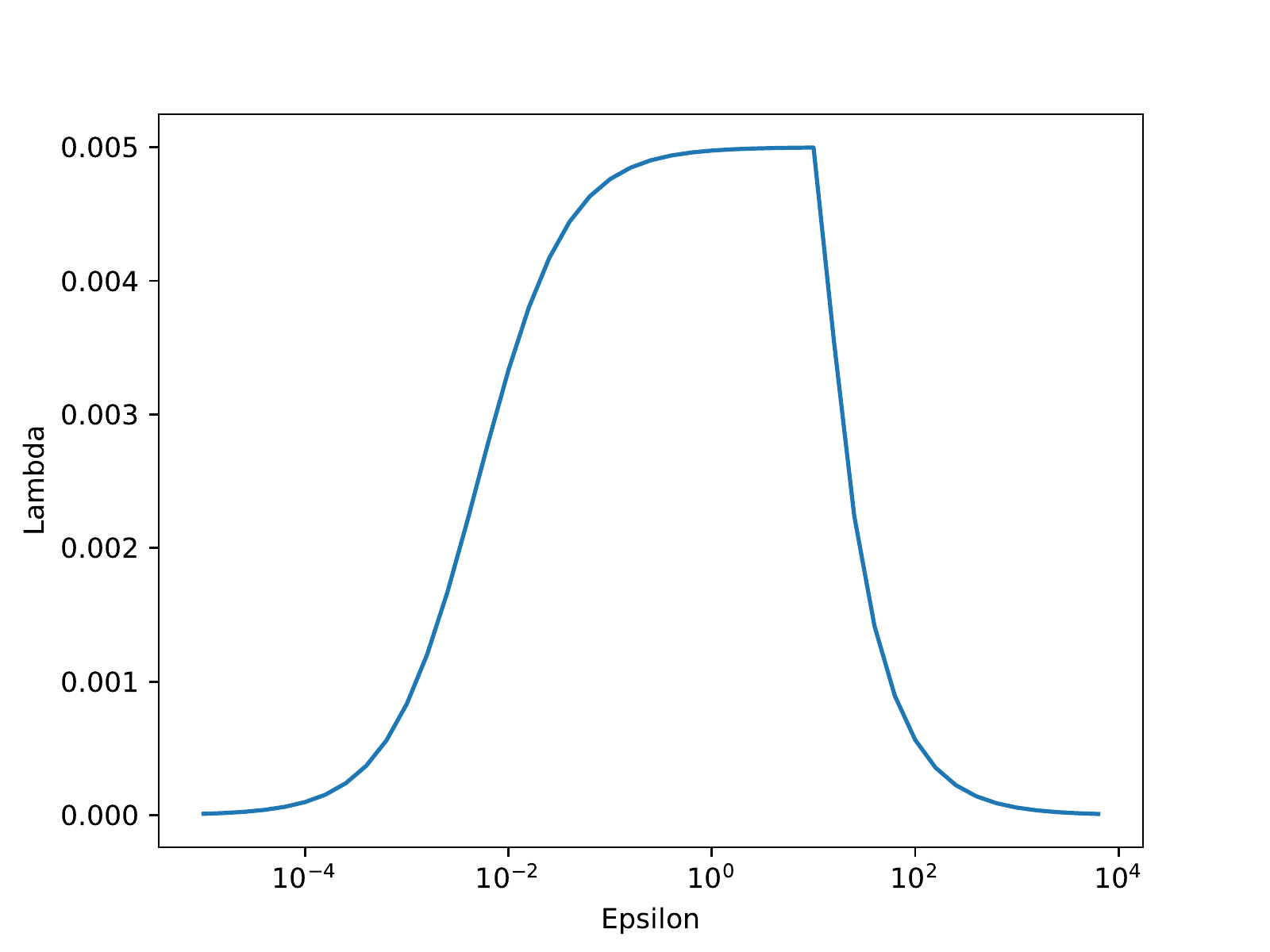}
        \caption{$200 \times 10$ coherent matrix.}
    \end{subfigure}
  \caption{The upper row of figures plot $\| \xk - \xopt\|^2$ as a function of $\epsilon$ for SAP-REK$(\epsilon)$. The lower row is a plot of $\lambda^+_{\min}(\mW_\epsilon)$ as a function of $\epsilon$. Notice the dip in each curve in the upper figures at $\epsilon = 100$ (upper left) and $\epsilon = 10$ (upper right). This highlights that  SAP-REK$(\epsilon=100)$ for a Gaussian matrix and SAP-REK$(\epsilon=10)$ overcome the convergence plateau. These faster overall convergence anomalies is explained by the lower plots of $\lambda^+_{\min}(\mW_\epsilon)$ as a function of $\epsilon$ because the convergence rate is inversely proportional to $\lambda^+_{\min}(\mW_\epsilon)$. Note: the $\epsilon$ axis is labeled Epsilon and the $\lambda^+_{\min}(\mW_\epsilon)$ axis is labeled Lambda.}
  \label{fig:epsilon}
\end{figure}

We also experiment with matrices of dimensions $200 \times 20$ and $400 \times 10$ and observe a similar phenomenon where  the most convergent SAP-REK$(\epsilon)$ methods correspond to $\epsilon = 10$ for Gaussian matrices and $\epsilon = 100$ for coherent matrices. Our findings our summarized in \cref{tab:eps_exp}. 
\begin{table}[H]
    \centering
    \begin{tabular}{c c|c c}
         & &\multicolumn{2}{c}{Type} \\
         & & Gaussian matrix & coherent matrix \\
        \hline
        \multirow{3}{*}{ Dimensions}  & $200 \times 10$ & 10 & 100 \\ 
         & $200 \times 20$ & 10 & 100  \\
         & $400 \times 10$ & 10 & 100 \\
    \end{tabular}
    \caption{The same experiment as first row of plots in \cref{fig:epsilon} is preformed with Gaussian and coherent matrices of dimensions $200 \times 20$ and $400 \times 10$. $\epsilon$ is sampled at each order of magnitude in $[10^{-5},10^4]$. The $\epsilon$ value yielding the fastest convergence is displayed in the center of the table for each given matrix type and dimension.}\label{tab:eps_exp}
\end{table}

%\jdm{I started editing the paragraph above but didn't finish. I'll pick up here next time.}

%\jdm{use an endash for number ranges. i.e. 700--800.}
We do a final experiment where we run SAP-REK$(\epsilon)$ and REK for $k = 10,000$ iterations to see if any SAP-REK$(\epsilon)$ eventually break convergence plateau for small $\epsilon$. The results of this experiment are in plots \cref{fig:xs}, \cref{fig:zs} and \cref{fig:xs_zs}. Rather than just plot $\norm{\xk - \xopt}^2$ as a function of $k$, we choose to include plots of $\norm{\zk - \zopt}^2$ and $\norm{\zk - \zopt}^2 - \epsilon \norm{\xk - \xopt}^2$. We plot \cref{fig:xs_zs} since it corresponds with our convergence result for SAP-REK$(\epsilon)$ in \cref{eq:convergence_result}.  Each line in \cref{fig:xs_zs} corresponds to a different $\epsilon$ value, so the y-axis label changes for each line plotted since it is a function of $\epsilon$.

\begin{figure}[h!]
\centering
\begin{subfigure}{.5\textwidth}
  \centering
  \includegraphics[width=\linewidth]{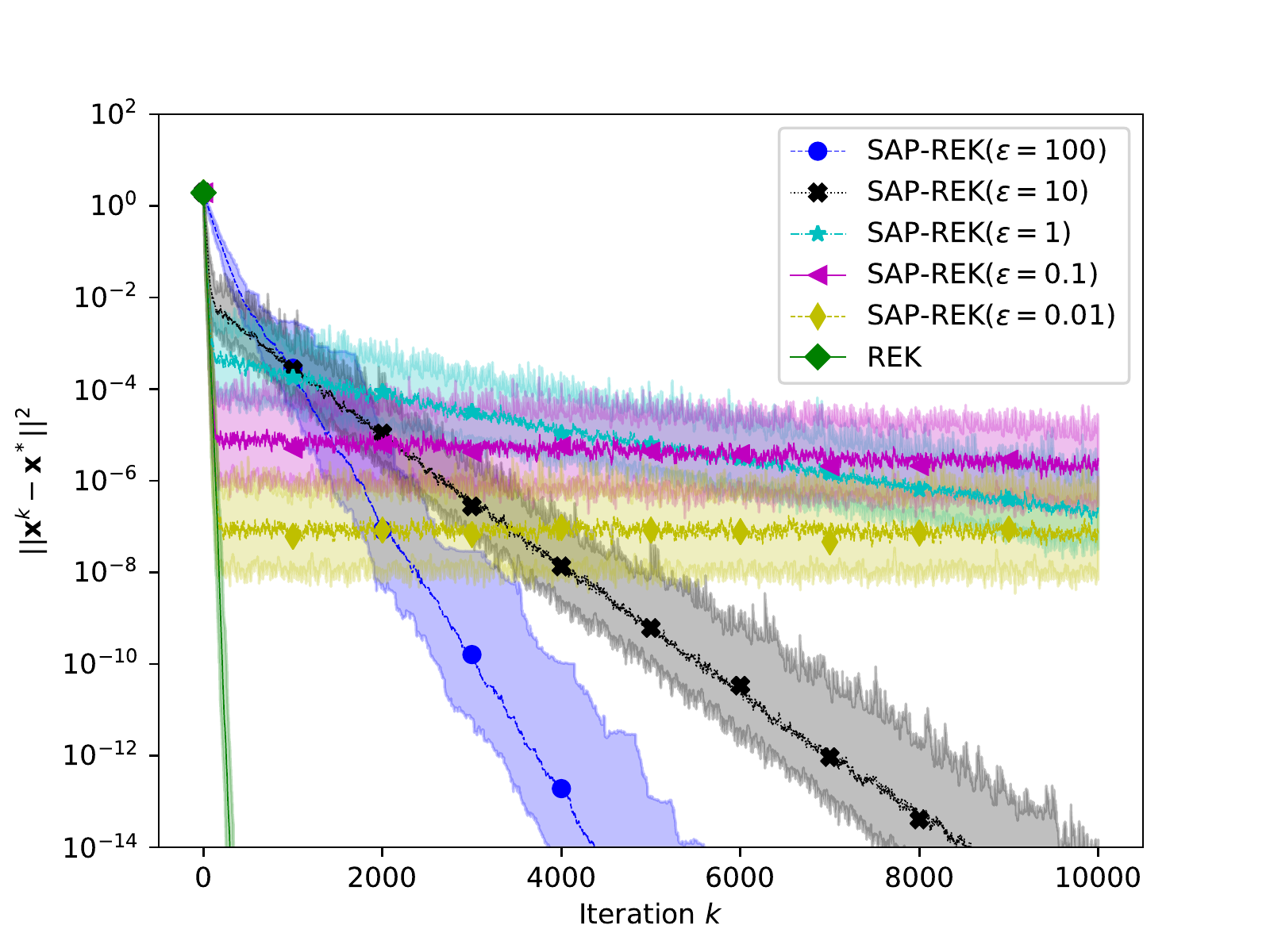}`
  \caption{Gaussian matrix}
  \label{fig:gauss_rek1}
\end{subfigure}%
\begin{subfigure}{.5\textwidth}
  \centering
  \includegraphics[width=\linewidth]{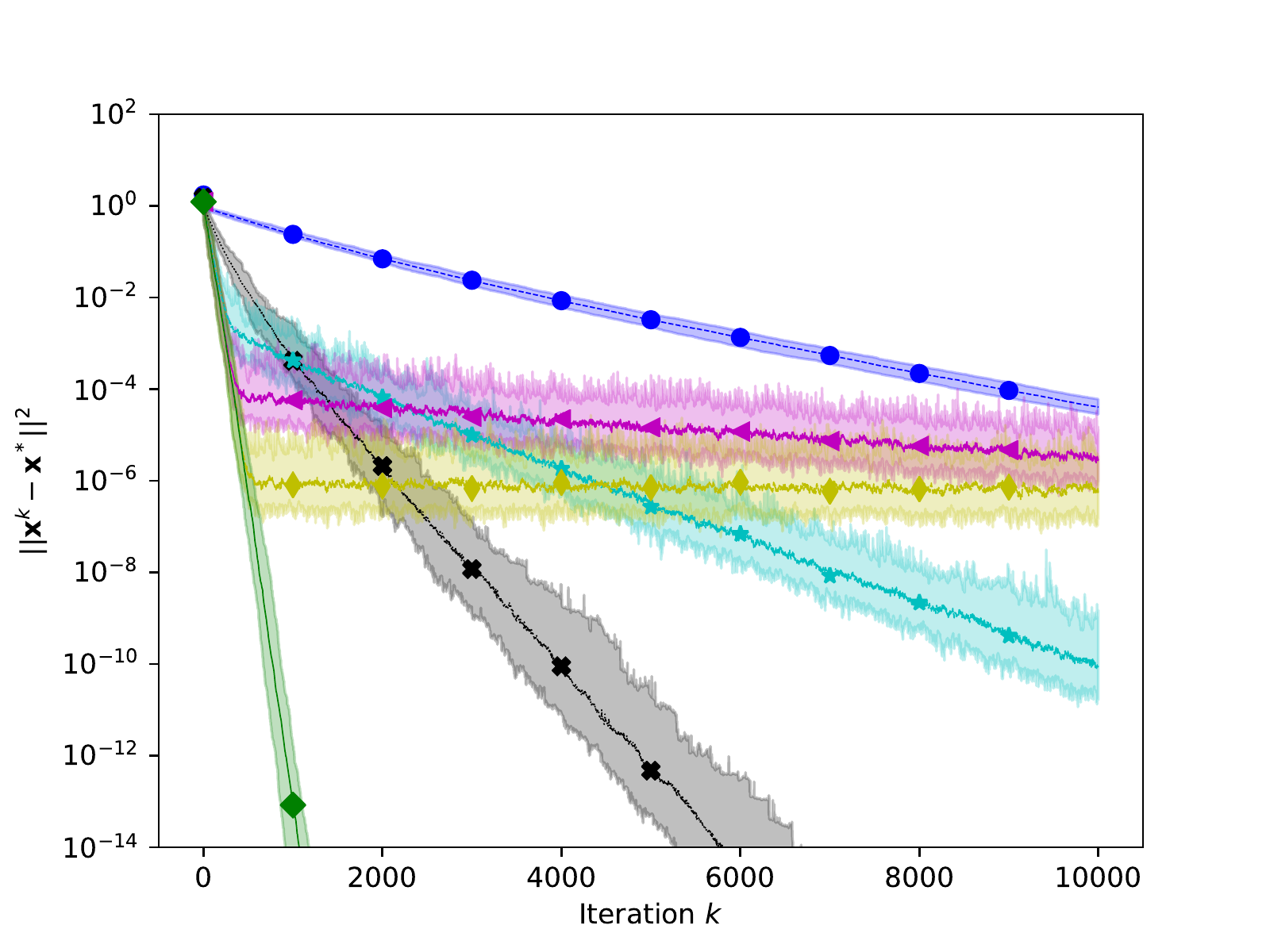}
  \caption{Coherent matrix}
  \label{fig:coherent_rek2}
\end{subfigure}
\caption{Plot of squared error norm vs. iteration. Matrices are $200 \times 10$. REK corresponds to $\epsilon = 0$.}
\label{fig:xs}
\end{figure}

The SAP-REK$(\epsilon)$ methods exhibit convergence behavior closer to REK for the first 700--800 iterations. SAP-REK$(\epsilon)$ with small $\epsilon$ appear hit an unbreakable plateau, whereas SAP-REK$(\epsilon)$ with a larger $\epsilon$ result in eventual convergence. These observations are consistent across \cref{fig:xs}, \cref{fig:zs} and \cref{fig:xs_zs}. In addition, we see that SAP-REK$(\epsilon = 100)$ and SAP-REK$(\epsilon = 10)$ are the most convergent SAP-REK for Gaussian and coherent matrices respectively. This aligns with the similar results in \cref{fig:epsilon}. In summary, these experiments suggest that, for most values of $\epsilon$, SAP-REK$(\epsilon)$ hit a convergence plateau for tall Gaussian and coherent matrices. However, there is a special window of $\epsilon$ values (near $\epsilon = 10$ for Gaussian matrices and $\epsilon = 100$ for coherent matrices) where SAP-REK$(\epsilon)$ breaks thee convergence plateau and eventually converges.

% We also observe that the y-axis values for SAP-REK$(\epsilon = 1,10,100)$ increase for the first few iterations in \cref{fig:zs} and \cref{fig:xs_zs}. 

% \jdm{Please reverse the order of the entries in the legend so that they match the order of the lines in the plots (roughly). Same comment for the other figure with a separate legend.}
% }
% \label{fig:xs_zs}
% \begin{subfigure}{.5\textwidth}
%   \centering
%   \includegraphics[width=.5\linewidth]{}
% \end{subfigure}
% \end{figure}

% \begin{figure}[t!]
% \centering
% \begin{subfigure}{.5\textwidth}
%   \centering
%   \includegraphics[width=.9\linewidth]{}
%   \caption{Gaussian matrix}
%   \label{fig:gauss_xzs_eps}
% \end{subfigure}%
% \begin{subfigure}{.5\textwidth}
%   \centering
%   \includegraphics[width=.9\linewidth]{}
%   \caption{Coherent matrix}
%   \label{fig:coherent_xzs_eps}
% \end{subfigure}
% \caption{Plot of $\norm{\xinit - \xopt}^2$. Matrices are $200 \times 10$.}
% \label{fig:xs_zs_eps}
% \end{figure}

\begin{figure}[H]
\centering
\begin{subfigure}{.5\textwidth}
  \centering
  \includegraphics[width=\linewidth]{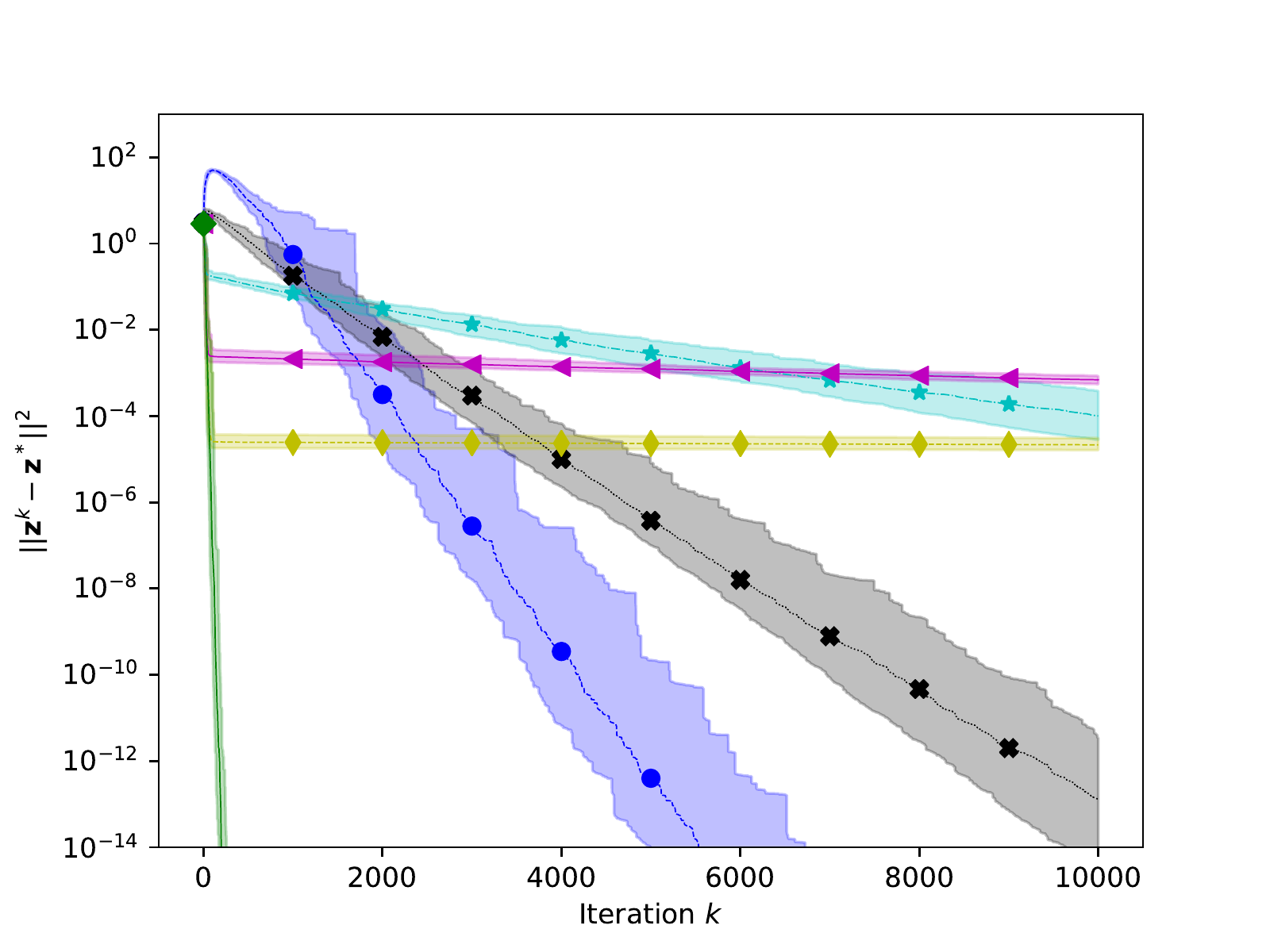}
  \caption{Gaussian matrix}
  \label{fig:gauss_rek_zs1}
\end{subfigure}%
\begin{subfigure}{.5\textwidth}
  \centering
  \includegraphics[width=\linewidth]{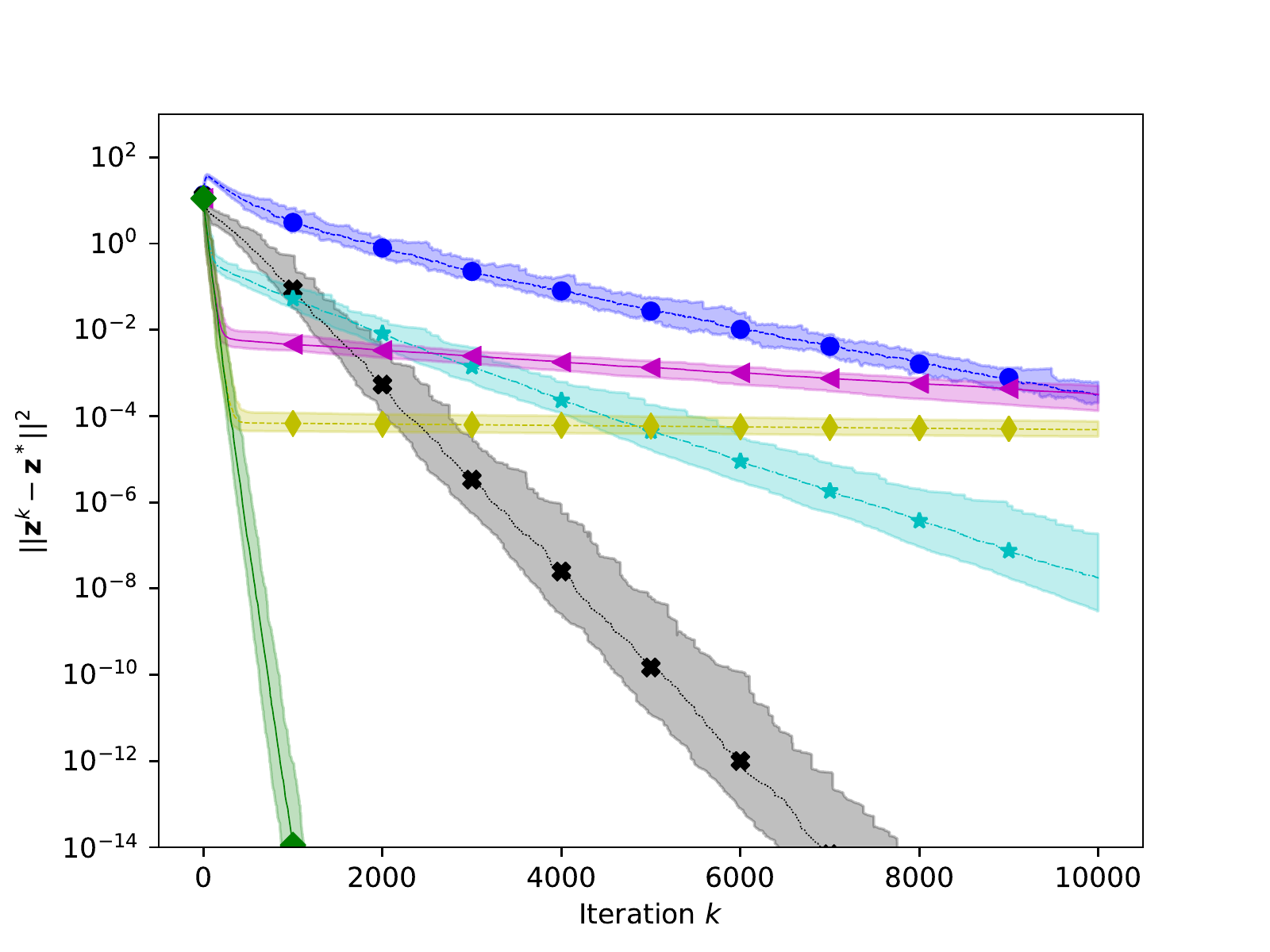}
  \caption{Coherent matrix}
  \label{fig:coherent_rek_zs1}
\end{subfigure}
\caption{Plot of the squared 2-norm between iterates of $z$ and $\zopt$ vs. iteration. Matrices are $200 \times 10$. REK corresponds to $\epsilon = 0$.}
\end{figure}
\begin{figure}[H]
\centering
\label{fig:zs}
\begin{subfigure}{.5\textwidth}
  \centering
  \includegraphics[width=\linewidth]{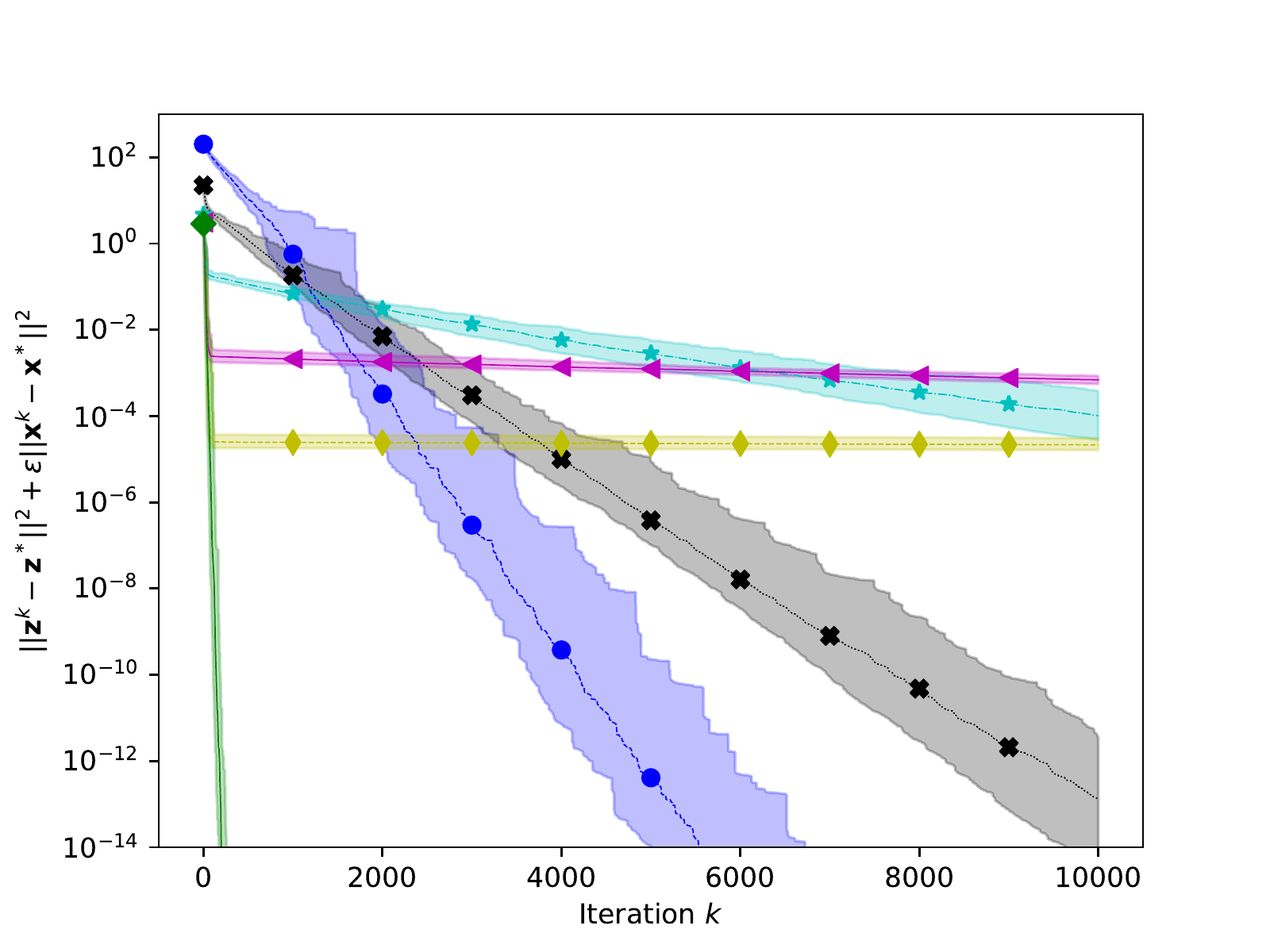}
  \caption{Gaussian matrix}
  \label{fig:gauss_rek_zs2}
\end{subfigure}%
\begin{subfigure}{.5\textwidth}
  \centering
  \includegraphics[width=\linewidth]{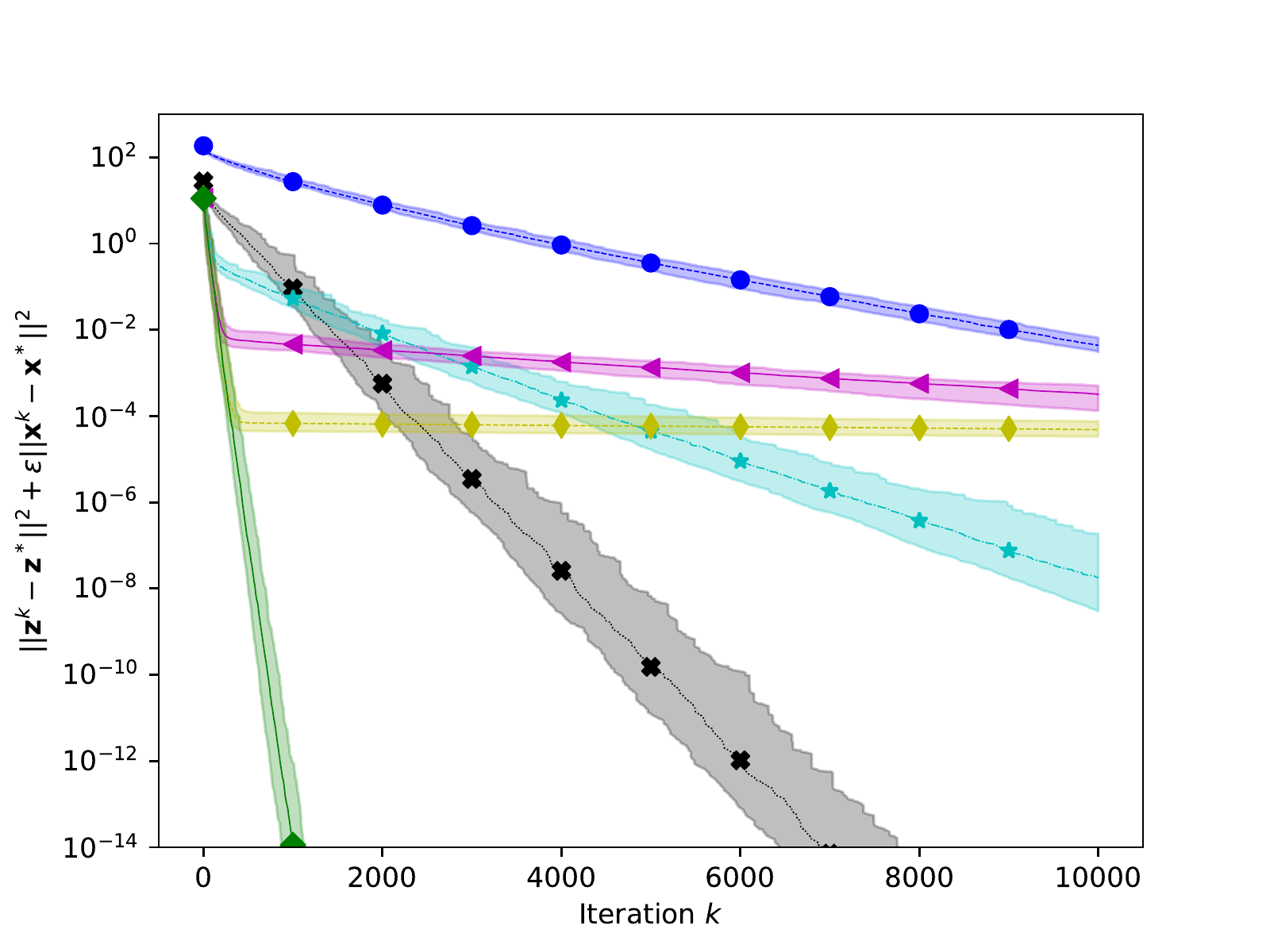}
  \caption{Coherent matrix}
  \label{fig:coherent_rek_zs2}
\end{subfigure}
\caption{Plot of $\norm{\zk - \zopt}^2 + \epsilon \norm{\xk - \xopt}^2$. Matrices are $200 \times 10$. REK corresponds to $\epsilon = 0$. For REK we plot $\norm{\zk - \zopt}^2$ since $\epsilon$ is not a parameter for REK. Note: the y-axis value in this plot is dependent on $\epsilon$, so we are really plotting a different value for each choice of $\epsilon$. }
\label{fig:xs_zs}
\end{figure}

\section{Conclusion and Future Work}
\label{conclusion}

In this work, we show that the widely used randomized extended Kaczmarz method for finding the least squares solutions of inconsistent systems is likely not a member of the Sketch-and-Project family of methods applied to a suitably embedded problem, which have previously been shown to include the original randomized Kaczmarz method and many of its variations. This presents a gap in the theory, which we bridge by showing that REK is in fact a limit point, in a certain sense, of the Sketch-and-Project family. We do this by constructing a new family of methods, SAP-REK$(\epsilon)$, within SAP, and show that taking the limit of the primary parameter $\epsilon\to 0$ recovers REK. We furthermore perform a detailed analysis of this family, showing that SAP-REK$(\epsilon)$ methods enjoy a similar flavor of linear convergence as REK, RK, and variants thereof. We demonstrate our methods experimentally, and show that they are, as expected, outperformed by REK but do exhibit limiting behavior toward it.

We believe that other extended methods such as randomized extended Gauss-Seidel and randomized extended block Kaczmarz may also be shown as limit points of SAP, and that there could be some unified 'Extended Sketch-and-Project' theory to be developed covering methods created both row and column sketches at each iteration. Furthermore, in our work we have shown that our limit does not recover the known convergence result for REK, suggesting that there is more work to be done to bridge this gap fully.

\section{Acknowledgements}

JDM was supported by NSF DGE-1829071. BJ was supported by NSF BIGDATA \#1740325 and NSF DMS \#2011140. NM was partially supported by NSF ATD \#1830676. NM thanks his advisor, Michael Kirby. BJ and JDM thank Deanna Needell for her advisorship and helpful conversations.

%\jdm{We need to thank Deanna and Nate's advisor. Think of anyone else we should be thanking here.}
%\jdm{We need to list all of our funding if there is any.}
%\jdm{Go through the references to make sure any published arxiv papers are cited as their published version, don't abbreviate journal names, include page numbers when available, omit or include locations of venues consistently, etc.}

% Use this file for scratchwork while debugging latex issues

%**************
% Bibliography
%**************

\bibliographystyle{abbrvnat}
\bibliography{main}

\end{document}